\newtheorem{theorem}{Theorem}[section]
\newtheorem{definition}[theorem]{Definition}
\newtheorem{proposition}[theorem]{Proposition}
\begin{document}

\title[Complex half-classical geometry]{Complex analogues of the half-classical geometry}

\author{Teodor Banica}
\address{T.B.: Department of Mathematics, University of Cergy-Pontoise, F-95000 Cergy-Pontoise, France. {\tt teo.banica@gmail.com}}

\author{Julien Bichon}
\address{J.B.: Department of Mathematics, University of Clermont Auvergne, F-63178 Aubiere, France. {\tt julien.bichon@uca.fr}}

\subjclass[2010]{14A22 (16S38, 46L65)}
\keywords{Quantum isometry, Noncommutative manifold}

\begin{abstract}
Under very strong axioms, there is precisely one real noncommutative geometry between the classical one and the free one, namely the half-classical one, coming from the relations $abc=cba$. We discuss here the complex analogues of this geometry, notably with a study of the geometry coming from the commutation relations between all the variables $\{ab^*,a^*b\}$, that we believe to be the ``correct'' one.
\end{abstract}

\maketitle

\section*{Introduction}

The fact that the behavior of the subatomic particles might be described by some kind of ``noncommutative geometry'', with a great deal of probability theory involved, is as old as quantum mechanics. While the weak and strong forces are radically different from gravity and electromagnetism, one hope, however, would be that this noncommutative geometry could simply appear as an ``analogue'' of the classical geometry.

From this perspective, any exploration of the noncommutative analogues of the various aspects of the classical geometry can only be useful. The subject is, of course, still in its infancy. As a reminder here, the classical geometry itself took about 2000 years to be axiomatized, and applied to basic problems in physics (Kepler, Newton). Noncommutative geometry seems to be on a faster track, but there is no reason to be overly optimistic, and not to remain modest. After all, we are dealing here with phenomena that both our human brains and machinery have big big troubles in observing and understanding. 

Doing some abstract mathematics, with these ideas in mind, will be our purpose here. We will be interested in noncommutative algebraic geometry, of very elementary type: basic curves and surfaces. Technically speaking, we will use the same formalism and philosophy as Connes \cite{con}, a noncommutative space being for us the dual of an operator algebra. For some successful applications of this philosophy, we refer to \cite{cco}.

Our starting point is a recent discovery, from \cite{bsp}, \cite{bve}, stating that when imposing the strongest possible axioms, there are only three real geometries, namely the classical one, the free one, and an intermediate one, called ``half-classical''. Motivated by this fact, we started in \cite{bbi} an investigation of the possible half-classical complex geometries. We will finish here the work started in \cite{bbi}, by identifying the ``standard'' such geometry.

Let us first explain the above-mentioned rigidity phenomenon, from the real case. From an elementary, all-around point of view, the basic objects of the $N$-dimensional geometry are the unit sphere, the standard cube, and the orthogonal group:
\begin{eqnarray*}
S^{N-1}_\mathbb R&=&\left\{x\in\mathbb R^N\Big|\sum_ix_i^2=1\right\}\\
T_N&=&\left\{x\in\mathbb R^N\Big|x_i=\pm\frac{1}{\sqrt{N}}\right\}\\
O_N&=&\left\{U\in M_N(\mathbb  R)\Big|U^t=U^{-1}\right\}
\end{eqnarray*}

Note that we have not included $\mathbb R^N$ itself in our list. This is because we would like later on to talk about noncommutative versions of the above objects, and our formalism here requires all the spaces to be compact. Physically speaking, our belief is that for certain key problems, such as those regarding QCD, this restriction is not important.

Quite remarkably, there is a full set of connections between the above objects:
\begin{enumerate}
\item $O_N$ is the isometry group of $S^{N-1}_\mathbb R$, and $S^{N-1}_\mathbb R$ appears as $O_N(\circ)$, where $\circ=(1,0,\ldots,0)$. In addition, we have an embedding $O_N\subset\sqrt{N}\cdot S^{N^2-1}_\mathbb R$.

\item $T_N$ appears inside $S^{N-1}_\mathbb R$ by setting $|x_1|=\ldots=|x_N|$. Conversely, $S^{N-1}_\mathbb R$ appears from $T_N\subset\mathbb R^N$ by ``deleting'' this relation, while still keeping $\sum_ix_i^2=1$.

\item $T_N\simeq\mathbb Z_2^N$ is a maximal compact abelian subgroup of $O_N$, and the group $O_N$ itself can be reconstructed from this subgroup, by using various methods.
\end{enumerate}

We are of course a bit vague here, but it is not hard to believe that, with a minimal knowledge of basic algebraic geometry and representation theory, all the $2\times3=6$ correspondences can indeed be established. This is actually a very good exercise.

Let us discuss now the construction of the noncommutative versions of the above objects. There are several possible choices here, and based on our personal knowledge of quantum mechanics, and of mathematical physics in general, we will use the operator algebra formalism. The idea indeed is that whenever we have a unital $C^*$-algebra $A$, we can write $A=C(X)$, with $X$ being a noncommutative compact space. This is supported by a non-trivial theorem of Gelfand, which states that when $A$ is commutative, the formula $A=C(X)$ holds indeed, with $X$ being a classical space, called spectrum of $A$.

So, let us define the free sphere, free cube, and free orthogonal group, by setting:
\begin{eqnarray*}
C(S^{N-1}_{\mathbb R,+})&=&C^*\left((x_i)_{i=1,\ldots,N}\Big|x_i=x_i^*,\sum_ix_i^2=1\right)\\
C(T_N^+)&=&C^*\left((x_i)_{i=1,\ldots,N}\Big|x_i=x_i^*,x_i^2=\frac{1}{N}\right)\\
C(O_N^+)&=&C^*\left((u_{ij})_{i,j=1,\ldots,N}\Big|u_{ij}=u_{ij}^*,u^t=u^{-1}\right)
\end{eqnarray*}

Observe that $u_i=\sqrt{N}x_i\in C(T_N^+)$ are subject to the relations $u_i=u_i^*=u_i^{-1}$. Thus, $T_N^+$ appears as the abstract dual of the discrete group $\mathbb Z_2^{*N}$. As for $O_N^+$, this is a compact quantum group in the sense of Woronowicz \cite{wo1}, with structural maps as follows:
$$\Delta(u_{ij})=\sum_ku_{ik}\otimes u_{kj}\quad,\quad\varepsilon(u_{ij})=\delta_{ij}\quad,\quad S(u_{ij})=u_{ji}$$

We refer to the original papers \cite{bgo}, \cite{wan} and to the lecture notes \cite{ba4} for details. In analogy now with what happens in the classical case, we have:
\begin{enumerate}
\item $O_N^+$ is the quantum isometry group of $S^{N-1}_{\mathbb R,+}$, and $S^{N-1}_{\mathbb R,+}$ appears as an homogeneous space over $O_N^+$. In addition, we have an embedding $O_N^+\subset\sqrt{N}\cdot S^{N^2-1}_{\mathbb R,+}$.

\item $T_N^+$ appears inside $S^{N-1}_{\mathbb R,+}$ by setting $x_1^2=\ldots=x_N^2$. Conversely, $S^{N-1}_{\mathbb R,+}$ appears from $T_N^+$ by ``deleting'' this relation, while still keeping $\sum_ix_i^2=1$.

\item $T_N^+\simeq\widehat{\mathbb Z_2^{*N}}$ is a maximal group dual subgroup of $O_N^+$, and $O_N^+$ itself can be reconstructed from this subgroup, via representation theory methods.
\end{enumerate}

To be more precise, having agreed that in the classical case, constructing the $2\times3=6$ correspondences is a good exercise in basic geometry, the situation is similar here, with all this being a good exercise in basic noncommutative geometry. The only point which is non-trivial is the correspondence $T_N^+\to O_N^+$, and we refer here to \cite{ba2}, \cite{bco}, \cite{bve}. Also, we refer to the lecture notes \cite{ba4} for all the needed details on all this material.

Let us try now to understand the possible ``intermediate liberations'' of the usual geometry. At the sphere and cube level, there is a lot of freedom in dealing with this question, or at least the known noncommutative geometry theories here don't provide any simple, quick answer. However, at the quantum group level, things are quite rigid. So, as a first good question, we would like to find the intermediate quantum groups, as follows:
$$\ \qquad \qquad \ \quad O_N\subset G\subset O_N^+\qquad\ \qquad (*)$$

In order to deal with this problem, let us recall Brauer's theorem \cite{bra}. Given a pairing $\pi\in P_2(k,l)$, between an upper row of $k$ points, and a lower row of $l$ points, we set:
$$T_\pi(e_{i_1}\otimes\ldots\otimes e_{i_k})=\sum_{j_1\ldots j_l}\delta_\pi\begin{pmatrix}i_1&\ldots&i_k\\ j_1&\ldots&j_l\end{pmatrix}e_{j_1}\otimes\ldots\otimes e_{j_l}$$

Brauer's theorem states that the intertwining spaces $Hom(u^{\otimes k},u^{\otimes l})$ for the orthogonal group $O_N$ are precisely those spanned by these maps $T_\pi$. In addition, a free version of this result is available, stating that for $O_N^+$, the intertwining spaces are spanned as well by the maps $T_\pi$, but this time with $\pi$ being a noncrossing pairing, $\pi\in NC_2$. See \cite{bco}.

Based on these results, let us call a quantum group $O_N\subset G\subset O_N^+$ easy when the following equalities hold, for a certain category of pairings, $NC_2\subset D\subset P_2$:
$$Hom(u^{\otimes k},u^{\otimes l})=span\left(T_\pi\Big|\pi\in D(k,l)\right)$$

Here the categorical operations are the horizontal and vertical concatenation, and the upside-down turning of the pairings. These operations ensure the fact that the spaces $span\left(T_\pi\big|\pi\in D(k,l)\right)$ form a tensor $C^*$-category, and as a consequence of Woronowicz's Tannakian duality \cite{wo2}, each such category produces a quantum group. For full details regarding the easy quantum group theory, we refer to \cite{bsp}, \cite{mal}, \cite{rwe}.

We are now ready to go back to ($*$). If we restrict the attention to the easy case, we just have to find the intermediate categories $NC_2\subset D\subset P_2$. And here, there is only one solution, namely the category $P_2^*$ generated by the following ``crossing'':
$$\xymatrix@R=15mm@C=10mm{
\circ\ar@{-}[drr]&\circ\ar@{-}[d]&\circ\ar@{-}[dll]
\\
\circ&\circ&\circ}$$

Due to the uniqueness result, we will call this diagram ``half-classical crossing'', and the corresponding relation, namely $abc=cba$, ``half-commutation relation''. See \cite{bve}.

Summarizing, we have now an answer to ($*$), which is in addition unique, in the easy quantum group setting. We should mention that, conjecturally, this solution is actually unique, in the arbitrary compact quantum group setting. See \cite{bbc}.

So, let us go ahead now, and construct our various geometric objects, as follows:
\begin{eqnarray*}
C(S^{N-1}_{\mathbb R,*})&=&C(S^{N-1}_{\mathbb R,+})\Big/\left<abc=cba\Big|\forall a,b,c\in\{x_i\}\right>\\
C(T_N^*)&=&C(T_N^+)\Big/\left<abc=cba\Big|\forall a,b,c\in\{x_i\}\right>\\
C(O_N^*)&=&C(O_N^+)\Big/\left<abc=cba\Big|\forall a,b,c\in\{u_{ij}\}\right>
\end{eqnarray*}

As in the classical and free cases, we have correspondences between these objects, with statements (1-3) as above. For details here, we refer to \cite{ba4}, \cite{bgo}, \cite{bme}, \cite{bve}. 

In view of the uniqueness results mentioned above, we can stop here the axiomatization work, because this is the third and last possible ``geometry''. So, what is left to do now is to start the actual geometric work. We would like  to understand the structure and geometry of the various algebraic manifolds $X\subset S^{N-1}_{\mathbb R,*},X\subset S^{N-1}_{\mathbb R,+}$, along with their differential geometric aspects, and Riemannian aspects as well. We have as well the important question of finding explicit matrix models for the coordinates of such manifolds. The whole subject here is still very young, and we refer to \cite{ba3}, \cite{bbi}, \cite{bic}.

The aim of the present paper is to clarify what happens in the complex geometry setting. Available here are the classical theory, having symmetry group $U_N$, and its free version, with symmetry group $U_N^+$. At the level of intermediate geometries, however, the situation is quite complicated, because we have many quantum groups as follows:
$$\ \qquad \qquad \ \quad U_N\subset G\subset U_N^+\qquad\ \qquad (**)$$

Our goal here will be not that of solving this classification problem for the intermediate complex geometries, but rather of trying to identify the ``main'' solution. For this purpose, we will use an axiomatic approach. We will first axiomatize the triples $(S,T,G)$ which are subject to correspondences (1-3) as above, then we will examine the various complex analogues of the triple $(S^{N-1}_{\mathbb R,*},T_N^*,O_N^*)$, and we will identify the ``main'' solution.

The paper is organized as follows: 1-2 contain various preliminaries and generalities, and in 3-4 we construct and study the complex half-classical geometry.

\section{Formalism}

We agree to call ``noncommutative compact spaces'' the abstract duals of the unital $C^*$-algebras. We denote such spaces by $X,Y,Z,\ldots$, with the corresponding $C^*$-algebras being denoted $C(X),C(Y),C(Z),\ldots$ We use this correspondence for formulating our various findings directly in terms of noncommutative spaces. For instance, we call a morphism $X\to Y$ injective if the corresponding morphism $C(Y)\to C(X)$ in surjective, and vice versa. Also, a direct product $X\times Y$ is by definition the noncommutative space corresponding to the $C^*$-algebra $C(X)\otimes C(Y)$, with $\otimes$ being the minimal tensor product.

We are interested in what follows in the noncommutative analogues of the real algebraic manifolds $X\subset S^{N-1}_\mathbb C$. Here we use of course the canonical embedding $S^{N-1}_\mathbb C\subset\mathbb C^N\simeq\mathbb R^{2N}$, and by real algebraic manifold we mean as usual the set of zeroes of a certain family of polynomials in the standard coordinates on $\mathbb R^N$, or, equivalently, of a certain family of polynomials in the standard coordinates on $\mathbb C^N$, and their conjugates.

Our starting point is the following well-known fact:

\begin{proposition}
Consider a real algebraic manifold $X\subset S^{N-1}_\mathbb C$, appearing as:
$$X=\left\{x\in\mathbb C^N\Big|\sum_i|x_i|^2=1,P_\alpha(x_1,\ldots,x_N)=0\right\}$$
The algebra of continuous functions $f:X\to\mathbb C$ is then given by
$$C(X)=C^*_{comm}\left(x_1,\ldots,x_N\Big|\sum_ix_ix_i^*=1,P_\alpha(x_1,\ldots,x_N)=0\right)$$
where by $C^*_{comm}$ we mean universal commutative $C^*$-algebra.
\end{proposition}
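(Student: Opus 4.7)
The plan is a direct application of Gelfand duality. Let $A$ denote the universal commutative $C^*$-algebra on the right-hand side, and let $Y$ be its Gelfand spectrum, so that $A=C(Y)$. The goal is to produce a homeomorphism $Y\simeq X$ under which the generators $x_i\in A$ correspond to the standard coordinate functions on $X$.

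First I would check that $A$ is well-defined. The relation $\sum_i x_ix_i^*=1$ forces $x_ix_i^*\leq 1$ in any $*$-representation, hence $\|x_i\|\leq 1$; so the universal enveloping $C^*$-norm on the $*$-algebra quotient of the free $*$-algebra by the stated relations is finite, and $A$ exists as a unital commutative $C^*$-algebra. In particular $Y$ is a compact Hausdorff space.

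Next I would identify $Y$ with $X$ set-theoretically. A character $\chi:A\to\mathbb{C}$ is determined by the $N$-tuple $(\chi(x_1),\dots,\chi(x_N))\in\mathbb{C}^N$, and conversely such an $N$-tuple $(z_1,\dots,z_N)$ extends to a character of $A$ if and only if it satisfies the defining relations, i.e.\ $\sum_i|z_i|^2=1$ and $P_\alpha(z_1,\dots,z_N)=0$ for all $\alpha$. This is exactly the defining condition for membership in $X$, giving a bijection $\Phi:Y\to X$, $\chi\mapsto(\chi(x_1),\dots,\chi(x_N))$.

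Finally, $\Phi$ is continuous for the weak-$*$ topology on $Y$ (evaluation at each $x_i$ is continuous) and is a continuous bijection from a compact space to a Hausdorff space, hence a homeomorphism. Transporting the isomorphism $A\simeq C(Y)$ across $\Phi$ sends each generator $x_i\in A$ to the coordinate function $x\mapsto x_i$ on $X$, and therefore $A\simeq C(X)$ as claimed. The only conceptual point to watch is the a priori boundedness of the generators, already handled above; the rest is routine Gelfand theory, with no real obstacle.
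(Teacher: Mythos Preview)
Your proof is correct and follows essentially the same approach as the paper's own argument: both verify well-definedness via the norm bound $\|x_i\|\leq 1$ from $\sum_i x_ix_i^*=1$, then invoke Gelfand duality to identify the spectrum of $A$ with $X$. The paper phrases the identification as producing an embedding $X'\subset\mathbb C^N$ landing in $X$, together with the obvious arrow $A\to C(X)$; your version via characters and the compact--Hausdorff argument is the same content, spelled out a bit more fully.
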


\begin{proof}
Observe first that the universal algebra in the statement is well-defined, because $\sum_ix_ix_i^*=1$ gives $||x_i||\leq1$ for any $i$, and so the biggest norm is bounded. If we denote by $A$ this algebra, we have an arrow $A\to C(X)$. Conversely, by Gelfand duality we have $A=C(X')$ for a certain compact space $X'$. The coordinates $x_i$ produce an embedding $X'\subset\mathbb C^N$, then the condition $\sum_ix_ix_i^*=1$ gives $X'\subset S^{N-1}_\mathbb C$, and finally the conditions $P_\alpha(x_1,\ldots,x_N)=0$ give $X'\subset X$. Thus we have $X=X'$, as claimed.
\end{proof}

The above result suggests to construct a free version $X^+$, simply by removing the commutativity assumption from the presentation of $C(X)$. However, this is quite tricky, because the relations $P_\alpha=0$ must not include, or imply, the commutativity.

In practice, this method works in a number of situations. We have:

\begin{definition}
The free complex sphere and free unitary group are constructed as
\begin{eqnarray*}
C(S^{N-1}_{\mathbb C,+})&=&C^*\left((x_i)_{i=1,\ldots,N}\Big|\sum_ix_ix_i^*=\sum_ix_i^*x_i=1\right)\\
C(U_N^+)&=&C^*\left((u_{ij})_{i=1,\ldots,N}\Big|u^*=u^{-1},u^t=\bar{u}^{-1}\right)
\end{eqnarray*}
where on the right we have universal $C^*$-algebras.
\end{definition}

As explained by Wang in \cite{wan}, the above noncommutative space $U_N^+$ is a compact quantum group in the sense of Woronowicz \cite{wo1}, with structural maps as follows:
$$\Delta(u_{ij})=\sum_ku_{ik}\otimes u_{kj}\quad,\quad\varepsilon(u_{ij})=\delta_{ij}\quad,\quad S(u_{ij})=u_{ji}^*$$

We have an action $U_N^+\curvearrowright S^{N-1}_{\mathbb C,+}$, whose properties are quite similar to that of the action $U_N\curvearrowright S^{N-1}_\mathbb C$. In order to explain this material, let us introduce a few more notions:

\begin{definition}
Consider an algebraic submanifold $X\subset S^{N-1}_{\mathbb C,+}$, i.e. a closed subset defined via algebraic relations, and a closed quantum subgroup $G\subset U_N^+$.
\begin{enumerate}
\item We say that we have an affine action $G\curvearrowright X$ when the formula $x_i\to\sum_ju_{ij}\otimes x_j$ defines a morphism of $C^*$-algebras $\Phi:C(X)\to C(G)\otimes C(X)$.

\item The biggest quantum subgroup $G\subset U_N^+$ acting affinely on $X$ is denoted $G^+(X)$, and is called quantum isometry group of $X$.
\end{enumerate}
\end{definition}

Here by ``algebraic relations'' we mean of course relations of type $P_\alpha(x_1,\ldots,x_N)=0$, with $P_\alpha$ being noncommutative $*$-polynomials in $N$ variables. As for the word ``biggest'', this means ``maximal in the appropriate category''. We agree in what follows to keep using the language of noncommutative compact spaces and manifolds, and to use more complicated language only when needed, and helpful in connection with problems.  

Observe that the morphism in (1) above is automatically coassociative, $(\Phi\otimes id)\Phi=(id\otimes\Delta)\Phi$, and counital as well, $(id\otimes\varepsilon)\Phi=id$. When $X,G$ are both classical such a morphism must appear by transposition from a usual affine group action $G\times X\to X$.

Regarding (2), it is routine to check that such a biggest quantum group exists indeed, simply by dividing $C(U_N^+)$ by the appropriate relations. We refer here to \cite{ba2} for details. 

We should mention that, in analogy with what happens in the classical case, there are of course several notions of quantum isometries, and quantum isometry groups, and those presented above are those that we will need here.  As an example, consider the usual sphere $S^{N-1}_\mathbb R$. The classical isometries of $S^{N-1}_\mathbb R$ are ``obviously'' the orthogonal matrices $U\in O_N$, but the meaning of ``obvious'' can of course vary with the person involved:
\begin{enumerate}
\item If we agree that the sphere is ``something round'', which is basic common sense, the isometry group is quite complicated to compute. We obtain $O_N$.

\item If we agree that the sphere is the set of solutions of $\sum_ix_i^2=1$, which is perhaps a bit unnatural, the isometry group is easy to compute. We obtain $O_N$ too.
\end{enumerate}

In the noncommutative setting, all this becomes considerably more complicated. For a full discussion on these topics, we refer to Goswami's papers \cite{go1}, \cite{go2}.

We will need as well the following constructions:

\begin{definition}
Consider a subspace $S\subset S^{N-1}_{\mathbb C,+}$, and a subgroup $G\subset U_N^+$.
\begin{enumerate}
\item The standard torus of $S$ is the subspace $T\subset S$ obtained by setting, at the algebra level, $C(T)=C(S)/<x_ix_i^*=x_i^*x_i=1/N>$.

\item The diagonal torus of $G$ is the subspace $\mathcal T\subset G$ obtained by setting, at the algebra level, $C(\mathcal T)=C(G)/<u_{ij}=0|\forall i\neq j>$.
\end{enumerate}
\end{definition}

Here, as usual, we use the direct language of noncommutative compact spaces, with ``subspace'' standing for ``closed noncommutative subspace'' and ``subgroup'' standing for ``closed quantum subgroup'', with all this coming from the Gelfand theorem.

For $S=S^{N-1}_\mathbb C$, the space constructed in (1) is the usual torus, $T=\mathbb T^N$. For $S=S^{N-1}_{\mathbb C,+}$, the rescaled generators $u_i=\sqrt{N}x_i$ are subject to the relations $u_i^*=u_i^{-1}$, which produce the free group algebra $C^*(F_N)$. Thus, we obtain here a group dual, $T=\widehat{F_N}$.

Regarding (2), observe that $C(\mathcal T)$ is generated by the variables $g_i=u_{ii}$, which are group-like. Thus $\mathcal T=\widehat{\Gamma}$, where $\Gamma$ is the group generated by these variables. See \cite{bve}.

We can now formulate our main definition, as follows:

\begin{definition}
A noncommutative geometry consists of three spaces, 
\begin{enumerate}
\item an intermediate algebraic manifold $S^{N-1}_\mathbb R\subset S\subset S^{N-1}_{\mathbb C,+}$, called sphere,

\item an intermediate space $\widehat{\mathbb Z_2^N}\subset T\subset\widehat{F_N}$, called torus,

\item and an intermediate quantum group, $O_N\subset G\subset U_N^+$,
\end{enumerate}
such that the following two conditions are satisfied,
\begin{enumerate}
\item $G$ is the quantum isometry group of $S$,

\item $T$ is the standard torus of $S$, as well as the diagonal torus of $G$,
\end{enumerate}
where, for the needs of the second axiom, we think of full and reduced group algebras as representing the same quantum space.
\end{definition}

This definition is something technical, and temporary. Further improving it, and comparing it with the other known definitions of noncommutative manifolds and geometries, and looking for applications too, is of course something that we have in mind. In what follows we will focus on this definition as it is, and present our results on the subject, as they are, and we will be of course back to all this in our future papers.

Regarding the terminology, the torus $T$ will be sometimes called ``cube'' of the geometry, because in the real case, as explained below, what we have here is rather a cube. Also, it is also useful to think of the discrete dual $\Gamma=\widehat{T}$, which appears as an intermediate quotient group $F_N\to\Gamma\to\mathbb Z_2^N$, as being the ``structural group'' of the geometry.

Finally, regarding the identification made at the end, this is something standard in noncommutative geometry. The point indeed is that there is a ``problem'' with Gelfand duality, coming from the fact that a discrete group dual $T=\widehat{\Gamma}$ can be represented by several $C^*$-algebras, including the maximal one $C^*(\Gamma)$ and the minimal one $C^*_{red}(\Gamma)$. The standard way of fixing this issue is that of identifying all these algebras, and this is what we do here too. For a full discussion on all this, see Woronowicz \cite{wo1}.

Here is now a useful reformulation of the axioms, in terms of $S$ only:

\begin{proposition}
An algebraic manifold $S^{N-1}_\mathbb R\subset S\subset S^{N-1}_{\mathbb C,+}$, with standard torus denoted $T\subset S$, produces a noncommutative geometry precisely when:
\begin{enumerate}
\item We have an affine action $O_N\curvearrowright S$.

\item $\delta(x_i)=\sqrt{N}x_i\otimes x_i$ defines a morphism of algebras $\delta:C(S)\to C(T)\otimes C(S)$.
\end{enumerate}
If these conditions are satisfied, we say that $S$ is a noncommutative sphere.
\end{proposition}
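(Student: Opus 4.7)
Setting $G = G^+(S)$ and viewing $T$ as the standard torus of $S$, the plan is to verify the axioms of Definition 1.5 for the triple $(S,T,G)$ directly. Several ingredients are automatic: $G \subset U_N^+$ by construction; the torus inclusions $\widehat{\mathbb Z_2^N} \subset T \subset \widehat{F_N}$ follow by functoriality of the standard torus applied to $S^{N-1}_{\mathbb R} \subset S \subset S^{N-1}_{\mathbb C,+}$; and the inclusion $O_N \subset G^+(S)$ is equivalent to condition (1) via the universal property of the quantum isometry group. The entire content of the proposition therefore reduces to the equivalence of (2) with the identification of $T$ with the diagonal torus $\mathcal T$ of $G^+(S)$.

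In the ``geometry $\Rightarrow$ (2)'' direction, the plan is to compose the affine action $\Phi\colon C(S) \to C(G) \otimes C(S)$, $x_i \mapsto \sum_j u_{ij} \otimes x_j$, with the canonical surjection $C(G) \to C(\mathcal T)$ sending $u_{ij} \mapsto \delta_{ij} g_i$. The result is $x_i \mapsto g_i \otimes x_i$, and under the axiomatic identification $\mathcal T = T$, which matches $g_i = u_i = \sqrt N x_i$, this is exactly the formula defining $\delta$.

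For the converse, which is the crux, the plan is to construct mutually inverse surjections between $C(\mathcal T)$ and $C(T)$. Condition (2) promotes $\delta$ into an honest coaction of $T$ on $S$, since $u_i$ is group-like in the group algebra $C(T) = C^*(\Lambda)$, so the universal property of $G^+(S)$ yields a surjection $C(G^+(S)) \to C(T)$ with $u_{ij} \mapsto \delta_{ij} u_i$, which factors through $C(\mathcal T) \twoheadrightarrow C(T)$, $g_i \mapsto u_i$. In the opposite direction, the diagonal coaction $C(S) \to C(\mathcal T) \otimes C(S)$, $x_i \mapsto g_i \otimes x_i$, descends to a morphism $C(T) \to C(\mathcal T) \otimes C(T)$ (the unitarity of $g_i$ in $C(\mathcal T)$ makes a direct check on the defining relations $x_i x_i^* = x_i^* x_i = 1/N$ immediate), and applying $(\mathrm{id} \otimes \varepsilon_T)$ yields a morphism $C(T) \to C(\mathcal T)$ with $u_i \mapsto g_i$, evidently inverse to the first.

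The main subtle point is ensuring that both of these maps are genuine $C^*$-algebra homomorphisms rather than formal substitutions; this is precisely what the universality of $G^+(S)$ on one side and the counit trick on the other provide, circumventing the awkward task of comparing the defining relations of $C(T)$ and $C(\mathcal T)$ directly, which arise from very different presentations.
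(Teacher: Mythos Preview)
Your argument is correct, and in the forward direction and in the construction of the map $\theta:C(\mathcal T)\to C(T)$ it coincides with the paper's proof. The two proofs diverge only in how the inverse map $\rho:C(T)\to C(\mathcal T)$ is built. The paper composes the universal coaction with evaluation at $e_1=(1,0,\ldots,0)\in S^{N-1}_\mathbb R\subset S$, then with the automorphism of $C(G)$ induced by an orthogonal matrix $M\in O_N\subset G$ whose first column is constant $1/\sqrt N$, and finally with the quotient onto $C(\mathcal T)$; condition~(1) is therefore used explicitly in this step. You instead pass the diagonal $\mathcal T$-coaction down to $C(T)\to C(\mathcal T)\otimes C(T)$ and then apply $\mathrm{id}\otimes\varepsilon_T$. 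This is slightly cleaner: it shows that condition~(1) is only needed to secure $O_N\subset G$, while the torus identification $T=\mathcal T$ follows from condition~(2) alone, whereas the paper's route entangles both conditions in the construction of $\rho$.

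One small point deserves to be made explicit. You write ``$u_i$ is group-like in the group algebra $C(T)=C^*(\Lambda)$'' and later invoke $\varepsilon_T$, but the fact that $C(T)$ is a group algebra (in particular, that it carries a counit) is not given a priori: the standard torus is defined purely as a quotient of $C(S)$. The justification is that once (2) yields the comultiplication $\overline\delta:C(T)\to C(T)\otimes C(T)$, the surjection $C(U_N^+)\to C(T)$, $u_{ij}\mapsto\delta_{ij}u_i$, intertwines the comultiplications, so $T$ becomes a closed quantum subgroup of $U_N^+$ and hence a genuine compact quantum group with counit. Equivalently and more concretely, the needed character $\varepsilon_T:x_i\mapsto 1/\sqrt N$ is simply evaluation at the point $(1/\sqrt N,\ldots,1/\sqrt N)\in\widehat{\mathbb Z_2^N}\subset T$, which is available thanks to the inclusion $S^{N-1}_\mathbb R\subset S$ you already recorded in your first paragraph.
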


\begin{proof}
Given $S^{N-1}_\mathbb R\subset S\subset S^{N-1}_{\mathbb C,+}$, consider its quantum isometry group $G\subset U_N^+$, and consider as well the standard torus $T\subset S$, and the diagonal torus $\mathcal T\subset G$.

Assuming that $(S,T,G)$ is a noncommutative geometry, we have $O_N\subset G$, so the condition (1) is clear. Also, since we have $T=\mathcal T$, the morphism in (2) simply appears by composing the universal coaction with the diagonal torus quotient map:
$$\begin{matrix}
C(S)&\to&C(G)\otimes C(S)&\to&C(\mathcal T)\otimes C(S)&=&C(T)\otimes C(S)\\
x_i&\to&\sum_ju_{ij}\otimes x_j&\to&g_i\otimes x_i&=&\sqrt{N}x_i\otimes x_i
\end{matrix}$$

Conversely, assuming that (1,2) are satisfied, we must prove that we have $T=\mathcal T$. For this purpose, observe first that the map $\delta$ induces a morphism as follows:
$$\overline{\delta}:C(T)\to C(T)\otimes C(T)\quad,\quad x_i\to\sqrt{N} x_i \otimes x_i$$

Thus $C(T)$ is a cocommutative Hopf algebra, and its elements $\sqrt{N}x_i$ are group-like. 

With this picture in mind, the map $\delta$ in the statement corresponds to an action $T\curvearrowright S$, and by universality of the quantum isometry group $G$, we obtain a map as follows:
$$\theta:C(\mathcal T)\to C(T)\quad,\quad u_{ii}\to\sqrt{N} x_i$$

In order to construct an inverse map, we will use a composition, as follows: 
$$\begin{matrix}
C(S)&\to&C(G)\otimes C(S)&\to&C(G)&\to&C(G)&\to&C(\mathcal T)\\
x_i&\to&\sum_ju_{ij}\otimes x_j&\to&u_{ij}&\to&\frac{1}{\sqrt{N}}\sum_ju_{ij}&\to&\frac{1}{\sqrt{N}}u_{ii}
\end{matrix}$$

Here the first map is the universal coaction, and the second map comes from the evaluation at $(1,0,\ldots,0)\in S^{N-1}_\mathbb R\subset S$, which gives a map $\varepsilon:C(S)\to\mathbb C$, $x_i\to\delta_{i1}$. Regarding the third map, this is the algebra automorphism $u_{ij}\to\sum_km_{kj}u_{ik}$ constructed by using the inclusion $O_N\subset G$ coming from our assumption (1), along with an orthogonal matrix $M=(m_{ij})$ having $\frac{1}{\sqrt{N}}$ entries on its first column. Observe that such orthogonal matrices $M$ exist indeed, for instance by using the Gram-Schmidt orthogonalization procedure. Finally, the fourth map is the canonical quotient map.

Now since the elements $X_i=\frac{1}{\sqrt{N}}u_{ii}$ satisfy the relations  $X_iX_i^*=X_i^*X_i=\frac{1}{N}$, the composition that we constructed factorizes into a map as follows:
$$\rho:C(T)\to C(\mathcal T)\quad,\quad x_i\to\frac{1}{\sqrt{N}}u_{ii}$$

It is clear that $\theta,\rho$ are inverse to each other, and this finishes the proof.
\end{proof}

At the level of basic examples, we have:

\begin{proposition}
We have the following examples of geometries:
\begin{enumerate}
\item Real geometry: $S=S^{N-1}_\mathbb R$, $T=\widehat{\mathbb Z_2^N}$, $G=O_N$.

\item Complex geometry: $S=S^{N-1}_\mathbb C$, $T=\widehat{\mathbb Z^N}$, $G=U_N$.

\item Free real geometry: $S=S^{N-1}_{\mathbb R,+}$, $T=\widehat{\mathbb Z_2^{*N}}$, $G=O_N^+$.

\item Free complex geometry: $S=S^{N-1}_{\mathbb C,+}$, $T=\widehat{F_N}$, $G=U_N^+$.
\end{enumerate}
\end{proposition}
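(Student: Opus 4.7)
The plan is to verify, for each of the four cases, the axioms of Definition 1.5 by applying the reformulation in Proposition 1.6. That reformulation reduces the verification to two conditions on the sphere $S$ alone: (a) the standard affine action of $O_N$ extends to $S$, and (b) the formula $\delta(x_i) = \sqrt{N}\, x_i \otimes x_i$ defines a morphism $C(S) \to C(T) \otimes C(S)$, where $T$ is the standard torus of $S$. Once these are checked, Proposition 1.6 gives that the triple $(S,\,T_{\mathrm{std}},\, G^+(S))$ is a noncommutative geometry, and it remains only to identify $T_{\mathrm{std}}$ and $G^+(S)$ with the $T$ and $G$ listed in the statement.

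For (a), the orthogonal invariance of the defining relations $\sum_i x_i^2 = 1$ (real cases) and $\sum_i x_i x_i^* = \sum_i x_i^* x_i = 1$ (complex cases) is immediate from $\sum_i v_{ij} v_{ik} = \delta_{jk}$ in $C(O_N)$: applying $x_i \mapsto \sum_j v_{ij}\otimes x_j$ to the sphere relation collapses it to $1\otimes 1$. For (b), the elements $\delta(x_i)$ satisfy
$$\sum_i \delta(x_i)\delta(x_i)^* = \sum_i N\, x_i x_i^* \otimes x_i x_i^*,$$
which, using $x_i x_i^* = 1/N$ inside $C(T)$, reduces to $\sum_i 1 \otimes x_i x_i^* = 1\otimes 1$, and similarly for the opposite-order sum and for $\sum_i x_i^2 = 1$ in the real cases. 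Any commutativity present in $C(S)$ in the two classical examples is preserved automatically, because $\delta$ only produces elementary tensors $x_i \otimes x_i$ which commute among themselves whenever the original $x_i$ do.

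It then remains to identify the standard torus and the quantum isometry group in each case. Imposing $x_i x_i^* = x_i^* x_i = 1/N$ on each $C(S)$ forces $u_i := \sqrt{N}\, x_i$ to be unitaries, and the inherited relations (real or complex, commuting or free) produce respectively $\widehat{\mathbb{Z}_2^N}$, $\widehat{\mathbb{Z}^N}$, $\widehat{\mathbb{Z}_2^{*N}}$, $\widehat{F_N}$, matching the claimed $T$. For $G^+(S)$, the inclusion $G \subset G^+(S)$ is a direct consequence of the affine action verified in (a); the reverse inclusion is the main obstacle and proceeds case by case. For the two classical examples it follows from Gelfand duality and the standard computation of isometry groups of spheres, while for the two free examples it is the content of the quantum isometry group computations in \cite{ba2}, \cite{bgo}, \cite{bve}. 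With these identifications granted, Proposition 1.6 gives that all four triples satisfy Definition 1.5, completing the proof.
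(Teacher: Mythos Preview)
Your proof is correct and considerably more detailed than the paper's own, which consists of the single sentence ``These results are well-known, and we refer to \cite{ba1}, \cite{bgo} for details here.'' Your route via Proposition~1.6 is a natural way to organize the verification, and the checks of conditions (a) and (b) and the identification of the standard tori are all fine.

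Two small points worth tightening. First, your condition (a) only explicitly verifies that $O_N$ acts on each $S$, which is exactly what Proposition~1.6 requires; but later you write that ``the inclusion $G\subset G^+(S)$ is a direct consequence of the affine action verified in (a),'' which overstates what (a) showed. The inclusions $U_N\subset G^+(S^{N-1}_\mathbb C)$, $O_N^+\subset G^+(S^{N-1}_{\mathbb R,+})$, $U_N^+\subset G^+(S^{N-1}_{\mathbb C,+})$ require the corresponding actions, which are equally elementary but were not literally covered by your check of (a). Second, for the two classical spheres the assertion that $G^+(S)$ equals $O_N$ or $U_N$ does \emph{not} follow from ``Gelfand duality and the standard computation of isometry groups of spheres'': one must know that the quantum isometry group of a classical sphere is itself classical, which is a genuine quantum rigidity statement (covered in \cite{bgo}, \cite{bhg}, and the references you cite for the free cases). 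With those two phrasings adjusted, your argument stands.
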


\begin{proof}
These results are well-known, and we refer to \cite{ba1}, \cite{bgo} for details here.
\end{proof}

Based on these examples, here are now a few more general notions:

\begin{definition}
A noncommutative geometry is called:
\begin{enumerate}
\item Real, if $S\subset S^{N-1}_{\mathbb R,+}$, $T\subset\widehat{\mathbb Z_2^{*N}}$, $G\subset O_N^+$.

\item Complex, if $S^{N-1}_\mathbb C\subset S$, $\widehat{\mathbb Z^N}\subset T$, $U_N\subset G$.

\item Classical, if $S\subset S^{N-1}_\mathbb C$, $T\subset\widehat{\mathbb Z^N}$, $G\subset U_N$.

\item Free, if $S^{N-1}_{\mathbb R,+}\subset S$, $\widehat{\mathbb Z_2^{*N}}\subset T$, $O_N^+\subset G$.
\end{enumerate}
\end{definition}

We will illustrate these notions in what follows, with several other examples.

Let us introduce now a few more notions. First, a geometry which is not real, nor complex, will be called ``hybrid''. Also, a geometry which is not classical, nor free, will be called ``intermediate''. The basic examples here come from:

\begin{definition}
We have spheres, tori, and quantum groups, as follows:
\begin{enumerate}
\item $S^{N-1}_{\mathbb R,*},\widehat{\mathbb Z_2^{\circ N}},O_N^*$, obtained respectively from $S^{N-1}_{\mathbb R,+},\widehat{\mathbb Z_2^{*N}},O_N^+$ by imposing to the standard coordinates the relations $abc=cba$.

\item $S^{N-1}_{\mathbb C,**},\widehat{\mathbb Z^{\circ\circ N}},U_N^{**}$, obtained respectively from $S^{N-1}_{\mathbb C,+},\widehat{F_N},U_N^+$ by imposing to the standard coordinates and their adjoints the relations $abc=cba$.
\end{enumerate}
\end{definition}

Here (1) is a well-established definition, coming from the work in \cite{bsp}, \cite{bve}. Regarding (2), we have there some temporary objects, coming from \cite{bdu}, which are somehow ``minimal'', and which will be replaced with the correct, maximal ones, later on.

We can now formulate our first result, as follows:

\begin{theorem}
We have geometries, whose unitary groups are as follows,
$$\xymatrix@R=15mm@C=15mm{
U_N\ar[r]&U_N^{**}\ar[r]&U_N^+\\
\mathbb TO_N\ar[r]\ar[u]&\mathbb TO_N^*\ar[r]\ar[u]&\mathbb TO_N^+\ar[u]\\
O_N\ar[r]\ar[u]&O_N^*\ar[r]\ar[u]&O_N^+\ar[u]}$$
with the middle row spaces obtained from the upper ones via the relations $ab^*=a^*b$.
\end{theorem}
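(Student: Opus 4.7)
The plan is to exhibit, for each of the nine quantum groups $G$ in the diagram, a canonical sphere $S$ and torus $T$ such that $(S,T,G)$ is a noncommutative geometry in the sense of Definition 1.5, then to check that the arrows in the diagram are induced by honest quotient maps. The spheres come naturally in a $3\times 3$ grid: the three columns correspond to the classical, half-classical ($abc=cba$) and free sphere relations, while the three rows correspond to imposing $x_i=x_i^*$ (bottom), $ab^*=a^*b$ for $a,b\in\{x_i\}$ (middle), or nothing further (top). So my first step is to define $S_G$ for each entry as the appropriate quotient of $S^{N-1}_{\mathbb C,+}$, and to verify via Proposition 1.6 that each is a sphere: the affine action $O_N\curvearrowright S_G$ is automatic because every relation imposed is preserved by orthogonal conjugation, and the comultiplication $\delta(x_i)=\sqrt N\,x_i\otimes x_i$ extends because all of the monomial relations used in each case are of the form where both sides become equal scalar multiples of simple tensors after applying $\delta$.

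Second, I would compute the standard torus $T_G$ of $S_G$ directly. Imposing $x_ix_i^*=x_i^*x_i=1/N$ kills the sphere part of the relation $\sum x_ix_i^*=1$ and turns the $x_i$ into $\frac{1}{\sqrt N}$ times unitaries $g_i$; the remaining sphere relations then translate to: $g_i=g_i^*$ (bottom row) or $g_ig_j^*=g_i^*g_j$ i.e.\ $g_j^2=g_i^2$ (middle row, forcing a common central square) or nothing (top row), plus $abc=cba$ in the middle column, giving the expected group duals $\widehat{\mathbb Z_2^N},\widehat{\mathbb Z_2^{\circ N}},\widehat{\mathbb Z_2^{*N}}$ on the bottom, $\widehat{\mathbb Z^N},\widehat{\mathbb Z^{\circ\circ N}},\widehat{F_N}$ on the top, and the natural hybrid versions in the middle.

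Third, I would verify that $G=G^+(S_G)$ and that $T_G$ is the diagonal torus of $G$. For the four corners this is Proposition 1.7; for $O_N^*$ and $U_N^{**}$ this is from \cite{bve} and \cite{bdu}. The new work concerns the middle row $\mathbb TO_N\subset\mathbb TO_N^*\subset\mathbb TO_N^+$. The key point here is a Tannakian-style transfer principle: if $S_G$ satisfies $x_ix_j^*=x_i^*x_j$ and $G'\subset U_N^+$ acts affinely on $S_G$ via $\Phi(x_i)=\sum_k u_{ik}\otimes x_k$, then comparing $\Phi(x_i)\Phi(x_j)^*$ with $\Phi(x_i)^*\Phi(x_j)$ and using that the $x_k,x_k^*$ are linearly independent in $C(S_G)$ forces $u_{ik}u_{jl}^*=u_{ik}^*u_{jl}$ inside $C(G')$. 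Conversely, the quantum groups $\mathbb TO_N^\bullet$ tautologically act on the corresponding spheres. The diagonal torus computation is immediate from the displayed relations on $u_{ii}$.

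Finally, the arrows in the diagram are all the obvious quotient maps, because each target is defined by adding more relations to the source; the commutativity of the diagram is then formal. The main obstacle will be the linear independence argument in the third step, which requires knowing that the standard coordinates and their adjoints on $S_G$ are linearly independent modulo the relations; this can be established by exhibiting the classical representation $S^{N-1}_{\mathbb C}\to S_G$ as a faithful enough model on the relevant subspaces, or by constructing an explicit classical or matrix-model quotient of $S_G$ on which $x_i,x_i^*$ are visibly linearly independent.
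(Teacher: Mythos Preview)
Your overall plan matches the paper's: define the middle-row objects by imposing $ab^*=a^*b$ on the corresponding top-row ones, verify that $\mathbb TO_N^\times$ acts on $\mathbb TS^{N-1}_{\mathbb R,\times}$ by the computation $X_iX_j^*=X_i^*X_j$ with $X_i=\sum_a u_{ia}\otimes x_a$, and then prove universality by extracting relations on the $u_{ij}$ from that identity. The use of Proposition~1.6 to package the torus axiom is a pleasant organizational choice not made explicit in the paper, but it leads to the same verification.

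There is however a genuine gap in your universality step. The identity $\Phi(x_i)\Phi(x_j)^*=\Phi(x_i)^*\Phi(x_j)$ expands to
\[
\sum_{k,l}\bigl(u_{ik}u_{jl}^*-u_{ik}^*u_{jl}\bigr)\otimes x_kx_l^*=0,
\]
so what you need is information about the \emph{degree-two} family $\{x_kx_l^*\}_{k,l}$, not about the degree-one elements $x_k,x_k^*$. Moreover, these degree-two elements are \emph{not} linearly independent in every case: already in the classical sphere $\mathbb TS^{N-1}_{\mathbb R}$ one has $x_kx_l^*=y_ky_l=y_ly_k=x_lx_k^*$, so the best you can extract directly is a symmetrized relation $(u_{ik}u_{jl}^*-u_{ik}^*u_{jl})+(u_{il}u_{jk}^*-u_{il}^*u_{jk})=0$. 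Getting from there to $u_{ik}u_{jl}^*=u_{ik}^*u_{jl}$ requires the antipode/relabelling trick (the ``standard method'' of Bhowmick--Goswami that the paper invokes, and that is worked out explicitly later in Proposition~3.9 for a related relation). Your proposed fix---exhibiting a quotient on which the $x_i,x_i^*$ are visibly independent---does not address this, since it is the quadratic monomials, not the linear ones, whose independence is at stake; a correct fix would combine the classical quotient with the torus quotient (where $x_kx_l^*\ne x_lx_k^*$ for $k\ne l$) in the half-classical and free columns, and fall back on the antipode argument in the classical column.
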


\begin{proof}
The results in the upper and lower row are well-known, see for instance \cite{ba1}. Regarding the middle row, consider indeed the following quotient algebra:
$$C(\mathbb TO_N^+)=C(U_N^+)\Big/\left<ab^*=a^*b\Big|\forall a,b\in\{u_{ij}\}\right>$$

Inside this algebra, observe that with $U_{ij}=\sum_au_{ia}\otimes u_{aj}$ we have:
$$U_{ij}U_{kl}^*=\sum_{ab}u_{ia}u_{kb}^*\otimes u_{aj}u_{bl}^*=\sum_{ab}u_{ia}^*u_{kb}\otimes u_{aj}^*u_{bl}=U_{ij}^*U_{kl}$$

Thus we can construct a comultiplication morphism $\Delta$, by mapping $u_{ij}\to U_{ij}$, and the existence of the counit $\varepsilon$ and of the antipode $S$ is clear too. Now with $\mathbb TO_N^+$ constructed as above, we can construct the other quantum groups as well, as follows:
$$\mathbb TO_N=\mathbb TO_N^+\cap U_N\quad,\quad\mathbb TO_N^*=\mathbb TO_N^+\cap U_N^{**}$$

For the spheres and tori, the discussion here parallels the one from the quantum group case. To be more precise, the definition of these objects is as follows:
\begin{eqnarray*}
C(\mathbb TS^{N-1}_{\mathbb R,\times})&=&C(S^{N-1}_{\mathbb C,\times})\Big/\left<ab^*=a^*b\Big|\forall a,b\in\{x_i\}\right>\\
C(\mathbb T\widehat{\mathbb Z_2^{\times N}})&=&C(\widehat{\mathbb Z_2^{\times N}})\Big/\left<ab^*=a^*b\Big|\forall a,b\in\{g_i\}\right>
\end{eqnarray*}

Regarding the axioms, let us prove now that the standard action $U_N^\times\curvearrowright S^{N-1}_{\mathbb C,\times}$ restricts to an action $\mathbb TO_N^\times\curvearrowright\mathbb TS^{N-1}_{\mathbb R,\times}$. With $X_i=\sum_au_{ia}\otimes x_a$ we have:
$$X_iX_j^*=\sum_{ab}u_{ia}u_{jb}^*\otimes x_ax_b^*=\sum_{ab}u_{ia}^*u_{jb}\otimes x_a^*x_b=X_i^*X_j$$

Thus we can indeed define our coaction map, via $x_i\to X_i$. In order to prove now the universality, assume that we have an action $G\curvearrowright\mathbb TS^{N-1}_{\mathbb R,\times}$. With $X_i=\sum_au_{ia}\otimes x_a$ as above we have $X_iX_j^*=X_i^*X_j$, and from this we obtain, via the standard method from \cite{bhg}, that we have $u_{ia}u_{jb}^*=u_{ia}^*u_{jb}$ for any $i,j,a,b$, and so $G\subset\mathbb TO_N^\times$. We refer here to \cite{ba1} for the classical case, the proof in the half-classical and free cases being similar.
\end{proof}

\section{Easiness, amenability}

We will need the notion of easy quantum group, from \cite{bsp}, \cite{fre}, \cite{rwe}, \cite{twe}.

We denote by $P(k,l)$ the set of partitions between an upper row of $k$ points, and a lower row of $l$ points, with each leg colored black or white, and with $k,l$ standing for the corresponding ``colored integers''. We have the following notion:

\begin{definition}
A category of partitions is a collection of sets $D=\bigcup_{kl}D(k,l)$, with $D(k,l)\subset P(k,l)$, which contains the identity, and is stable under:
\begin{enumerate}
\item The horizontal concatenation operation $\otimes$.

\item The vertical concatenation $\circ$, after deleting closed strings in the middle.

\item The upside-down turning operation $*$ (with reversing of the colors).
\end{enumerate}
\end{definition}

As explained in \cite{twe}, such categories produce quantum groups. To be more precise, associated to any partition $\pi\in P(k,l)$ is the following linear map:
$$T_\pi(e_{i_1}\otimes\ldots\otimes e_{i_k})=\sum_{j_l\ldots j_l}\delta_\pi\begin{pmatrix}i_1&\ldots&i_k\\ j_1&\ldots&j_l\end{pmatrix}e_{j_1}\otimes\ldots\otimes e_{j_l}$$

Here the Kronecker type symbol $\delta_\pi\in\{0,1\}$ is by definition 1 if all the strings of $\pi$ join pairs of equal indices, and is 0 otherwise. With this notion in hand, we have:

\begin{definition}
A compact quantum group $G\subset U_N^+$ is called easy when
$$Hom(u^{\otimes k},u^{\otimes l})=span\left(T_\pi\Big|\pi\in D(k,l)\right)$$
for any $k,l$, for a certain category of partitions $D\subset P$.
\end{definition}

In other words, the easiness condition states that Tannakian dual of $G$, also called Schur-Weyl dual, should come in the simplest possible way: from partitions.

In order to discuss some basic examples, consider the categories of pairings, and of noncrossing pairings, $NC_2\subset P_2$. Consider as well the ``color-matching'' versions of these categories, $\mathcal{NC}_2\subset \mathcal P_2$, the color-matching condition stating that the various types of strings (upper, lower, through) of our pairings must be colored as follows:
$$\xymatrix@R=12mm@C=12mm{
\circ\ar@{-}[d]&\bullet\ar@{-}[d]&\circ\ar@{-}@/_/[r]&\bullet&\bullet\ar@{-}@/_/[r]&\circ\\
\circ&\bullet&\circ\ar@{-}@/^/[r]&\bullet&\bullet\ar@{-}@/^/[r]&\circ}$$

With these notions in hand, we have the following result:

\begin{proposition}
We have easy quantum groups, as follows,
$$\xymatrix@R=15mm@C=15mm{
U_N\ar[r]&U_N^+\\
O_N\ar[r]\ar[u]&O_N^+\ar[u]}
\ \ \xymatrix@R=9mm@C=5mm{\\ \ar@{~}[r]&\\&\\}\ \ 
\xymatrix@R=16mm@C=16mm{
\mathcal P_2\ar[d]&\mathcal{NC}_2\ar[l]\ar[d]\\
P_2&NC_2\ar[l]}$$
with the diagram at right describing the corresponding categories of partitions.
\end{proposition}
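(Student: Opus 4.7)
The plan is to invoke Woronowicz's Tannakian duality \cite{wo2}, in the partition-category form of \cite{bsp,twe}, which assigns to any category $D\subset P$ a unique compact quantum group $G_D\subset U_N^+$ whose intertwiner spaces are precisely $\mathrm{span}(T_\pi|\pi\in D(k,l))$. The easiness claim then reduces to two tasks: first, checking that $P_2$, $NC_2$, $\mathcal{P}_2$ and $\mathcal{NC}_2$ really are categories of partitions in the sense of Definition 2.1; second, identifying the resulting $G_D$ with $O_N$, $O_N^+$, $U_N$, $U_N^+$ respectively, which in turn yields the inclusion diagram of partition categories displayed on the right by contravariance of the Tannakian correspondence.

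The first task is routine: one verifies that horizontal concatenation, vertical composition (after deletion of closed strings), and upside-down turning with colour reversal all preserve the properties of being a pairing, being noncrossing, and respecting colour matching, and that the identity pairing of each colour lies in all four collections. For the second task I would argue generator by generator. The essential diagrams are the semicircle $\cap\in P_2(0,2)$ in its various colourings and the crossing $\pi_\times\in P_2(2,2)$. A direct unwinding of the definition of $T_\pi$ shows that requiring $T_\cap\in Hom(1,u^{\otimes 2})$ is equivalent to $\sum_k u_{ik}u_{jk}=\delta_{ij}$, that its upside-down turning encodes $\sum_k u_{ki}u_{kj}=\delta_{ij}$, and that $T_{\pi_\times}\in End(u^{\otimes 2})$ is the flip, whose intertwiner property is equivalent to $[u_{ij},u_{kl}]=0$.

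In the uncoloured setting there is a single colour so the entries are forced to be self-adjoint, and the relations above yield exactly the standard presentations of $O_N^+$ (with noncrossing pairings only) and of $O_N$ (after adjoining the crossing). The main obstacle, and the heart of the proof, lies in the coloured case. Here the colour-matching restriction means that the semicircle cannot connect points of opposite colour, so $T_\cap$ no longer forces $u_{ij}=u_{ij}^*$; it yields instead the biunitarity relations $u^*u=uu^*=1$ together with $u^t\bar u=\bar u u^t=1$, which are precisely those defining $U_N^+$ in Definition 1.2. Adjoining the colour-matching crossing then produces commutativity and hence $U_N$. Modulo the verification that $\cap$ and $\pi_\times$ with the appropriate colour patterns really do generate the four categories — a standard combinatorial argument for $P_2$ and $NC_2$, and a minor refinement in the coloured setting — Tannakian duality delivers the identifications and the inclusion diagram, completing the proof.
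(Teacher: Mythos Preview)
Your approach—invoke Tannakian duality and identify the defining relations of each quantum group with generating partitions of the corresponding category—is essentially the paper's. The paper organises the four cases slightly differently: it begins with $U_N^+$, whose category $\mathcal{NC}_2$ is generated by the caps in their two alternating colourings $\circ\bullet$ and $\bullet\circ$, and then obtains $O_N^+$, $U_N$, $O_N$ by adjoining respectively the colour-changing identities $|^{\hskip-1.32mm\circ}_{\hskip-1.32mm\bullet},|_{\hskip-1.32mm\circ}^{\hskip-1.32mm\bullet}$, the matching crossings, or both, and reading off the generated categories.

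One point needs correcting. You write that under the colour-matching restriction ``the semicircle cannot connect points of opposite colour''; the paper's convention (displayed just before the statement) is exactly the reverse: a colour-matching lower cap must join $\circ$ to $\bullet$, and it is precisely this that places $T_\cap$ in $Hom(1,u\otimes\bar u)$ and hence encodes $uu^*=1$ rather than $uu^t=1$. With the cap coloured $\circ\circ$ as you describe, the intertwiner condition would read $\sum_k u_{ik}u_{jk}=\delta_{ij}$, i.e.\ $uu^t=1$, which is not the biunitarity defining $U_N^+$. Your conclusion (biunitarity from the matching caps) is correct; only the sentence explaining the colouring is inverted.
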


\begin{proof}
Here the results on the right are the Brauer theorem for $O_N$, and for $U_N$, and the results on the left are free versions of Brauer's theorem, discussed in \cite{bco}. As a quick, partly heuristic explanation here, all these results follow from Tannakian duality:

(1) $U_N^+$ is defined via the relations $u^*=u^{-1},u^t=\bar{u}^{-1}$, which tell us that the operators $T_\pi$, with $\pi={\ }^{\,\cap}_{\circ\bullet}$ and $\pi={\ }^{\,\cap}_{\bullet\circ}$, must be in the associated Tannakian category $C$. We therefore obtain $C=span(T_\pi|\pi\in D)$, with $D=<{\ }^{\,\cap}_{\circ\bullet}\,\,,{\ }^{\,\cap}_{\bullet\circ}>={\mathcal NC}_2$, as claimed.

(2) $O_N^+\subset U_N^+$ is defined by imposing the relations $u_{ij}=\bar{u}_{ij}$, which tell us that the operators $T_\pi$, with $\pi=|^{\hskip-1.32mm\circ}_{\hskip-1.32mm\bullet}$ and $\pi=|_{\hskip-1.32mm\circ}^{\hskip-1.32mm\bullet}$, must be in the associated Tannakian category $C$. We therefore obtain $C=span(T_\pi|\pi\in D)$, with $D=<\mathcal{NC}_2,|^{\hskip-1.32mm\circ}_{\hskip-1.32mm\bullet},|_{\hskip-1.32mm\circ}^{\hskip-1.32mm\bullet}>=NC_2$, as claimed.

(3) $U_N\subset U_N^+$ is defined via the relations $[u_{ij},u_{kl}]=0$ and $[u_{ij},\bar{u}_{kl}]=0$, which tell us that the operators $T_\pi$, with $\pi={\slash\hskip-2.1mm\backslash}^{\hskip-2.5mm\circ\circ}_{\hskip-2.5mm\circ\circ}$ and $\pi={\slash\hskip-2.1mm\backslash}^{\hskip-2.5mm\circ\bullet}_{\hskip-2.5mm\bullet\circ}$, must be in the associated Tannakian category $C$. Thus $C=span(T_\pi|\pi\in D)$, with $D=<\mathcal{NC}_2,{\slash\hskip-2.1mm\backslash}^{\hskip-2.5mm\circ\circ}_{\hskip-2.5mm\circ\circ},{\slash\hskip-2.1mm\backslash}^{\hskip-2.5mm\circ\bullet}_{\hskip-2.5mm\bullet\circ}>=\mathcal P_2$, as claimed.

(4) Finally, in order to deal with $O_N$, we can use here the formula $O_N=O_N^+\cap U_N$. At the categorical level, this tells us that the associated Tannakian category is given by $C=span(T_\pi|\pi\in D)$, with $D=<NC_2,\mathcal P_2>=P_2$, as claimed.
\end{proof}

There are many other examples of easy quantum groups, as for instance the permutation group $S_N$ and its free analogue $S_N^+$, the corresponding categories being here the category of all partitions $P$, and the category of noncrossing partitions $NC\subset P$. See \cite{bsp}.

In the pairing case, however, the main examples remain those in Proposition 2.3. Now observe that the 4 quantum groups there are precisely the unitary groups of the 4 main geometries, from Proposition 1.7. We are therefore led to the following notion:

\begin{definition}
A noncommutative geometry is called easy when its associated unitary group $O_N\subset G\subset U_N^+$ is easy, coming from a category of pairings ${\mathcal NC}_2\subset D\subset P_2$.
\end{definition}

Our first task will be that of proving that the 9 geometries from Theorem 1.10 above are all easy. For this purpose, let $P_2^*\subset P_2$ be the category of pairings having the property that when flatenning the pairing (which means rotating, as for the resulting pairing to have only lower legs), each string has an even number of points between its legs. Let also $\bar{P}_2\subset P_2$ be the category of pairings having the property that when flatenning the pairing, the number of $\circ$ symbols equals the number of $\bullet$ symbols. Finally, let us define as well categories $\mathcal P_2^{**},\bar{P}_2^*,\bar{NC}_2$ in the obvious way, by taking intersections.

With these notions in hand, we have the following result:

\begin{proposition}
The basic $9$ geometries are easy, with quantum groups as follows,
$$\xymatrix@R=12mm@C=12mm{
U_N\ar[r]&U_N^{**}\ar[r]&U_N^+\\
\mathbb TO_N\ar[r]\ar[u]&\mathbb TO_N^*\ar[r]\ar[u]&\mathbb TO_N^+\ar[u]\\
O_N\ar[r]\ar[u]&O_N^*\ar[r]\ar[u]&O_N^+\ar[u]}
\ \ \xymatrix@R=16mm@C=5mm{\\ \ar@{~}[r]&\\&\\}\ \ 
\xymatrix@R=12.7mm@C=12.7mm{
\mathcal P_2\ar[d]&\mathcal P_2^{**}\ar[l]\ar[d]&\mathcal{NC}_2\ar[l]\ar[d]\\
\bar{P}_2\ar[d]&\bar{P}_2^*\ar[l]\ar[d]&\bar{NC}_2\ar[l]\ar[d]\\
P_2&P_2^*\ar[l]&NC_2\ar[l]}$$
with the diagram at right describing the corresponding categories of partitions.
\end{proposition}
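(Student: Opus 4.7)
The plan is to apply Woronowicz's Tannakian duality \cite{wo2} to each of the nine quantum groups in the diagram. Recall that a quantum group $G\subset U_N^+$ is determined by its tensor category of intertwiners $C_G=\bigcup_{k,l}\mathrm{Hom}(u^{\otimes k},u^{\otimes l})$, and that imposing a relation on the generators $u_{ij}$ in $C(G)$ corresponds to adjoining an appropriate operator $T_\pi$ to $C_G$. Easiness amounts to saying $C_G=\mathrm{span}(T_\pi\mid\pi\in D)$ for a category of pairings $D$. The four corners $U_N,U_N^+,O_N,O_N^+$ are handled by Proposition 2.3, so only the five remaining cases need discussion.

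For the middle column $(O_N^*,\mathbb{T}O_N^*,U_N^{**})$, the defining half-commutation relation $abc=cba$ on the standard coordinates, respectively on coordinates and their adjoints, translates by a standard Tannakian computation to the requirement that $T_\pi$ for the half-classical crossing (with the appropriate colorings) lies in $C_G$. Starting from $NC_2$, $\bar{NC}_2$, or $\mathcal{NC}_2$ and adjoining this generator produces $P_2^*$, $\bar{P}_2^*$, $\mathcal{P}_2^{**}$, respectively. The case $O_N^*$ is from \cite{bve} and the case $U_N^{**}$ from \cite{bdu}; the intermediate case $\mathbb{T}O_N^*$ follows either by combining these computations directly, or equivalently by taking the intersection $\mathbb{T}O_N^*=\mathbb{T}O_N^+\cap U_N^{**}$ of Theorem 1.10, which at the categorical level becomes $\bar{P}_2^*=\bar{P}_2\cap P_2^*$.

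For the middle row $(\mathbb{T}O_N,\mathbb{T}O_N^*,\mathbb{T}O_N^+)$, the defining relation $ab^*=a^*b$ translates, exactly as in the proof of Theorem 1.10, to the requirement that $T_\pi$ for a color-swap pairing (with mismatched colors between its upper and lower endpoints) lies in $C_G$. Adjoining this generator to $\mathcal{P}_2$, $\mathcal{P}_2^{**}$, $\mathcal{NC}_2$ cuts these categories down to their color-balanced sub-categories $\bar{P}_2$, $\bar{P}_2^*$, $\bar{NC}_2$. The inclusions among the nine categories then match the inclusions among the nine quantum groups by contravariant functoriality of Tannakian duality.

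The main obstacle will be the combinatorial saturation step: verifying that the category generated by the adjoined operators is \emph{exactly} the one claimed, rather than merely contained in it. The easy inclusion is automatic, while the reverse reduces to expressing an arbitrary pairing in the target category as a composition of the adjoined generator with pairings from the starting category. For the noncrossing row this follows by an induction on the number of strings; for the crossing rows one additionally uses the fact that $P_2$ is generated over $NC_2$ by a single basic crossing, which together with the color-swap generator produces every element of $\bar{P}_2$. Once the nine categorical descriptions are in hand, easiness of the corresponding geometries in the sense of Definition 2.4 is immediate, and the completion to full noncommutative geometries (with the matching spheres and tori) is verified by Proposition 1.6 combined with the standard action arguments already used in Theorem 1.10.
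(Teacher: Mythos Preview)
Your overall strategy matches the paper's: dispose of the four corners via Proposition 2.3, cite \cite{bve} and \cite{ba1} for $O_N^*$ and $U_N^{**}$, handle $\mathbb{T}O_N^+$ by translating $ab^*=a^*b$ into a partition diagram, and obtain $\mathbb{T}O_N^*$ by intersection. The paper additionally offers a short direct argument for $\mathbb{T}O_N$ (write $u=zv$ with $z\in\mathbb{T}$ and $v=\bar v$; the $z$-cancellation condition on $\mathrm{Hom}(u^{\otimes k},u^{\otimes l})$ is precisely the balance condition defining $\bar P_2$), whereas you use the uniform Tannakian approach throughout. Either is fine.

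There is, however, a consistent directional error in your handling of the contravariance. You write that adjoining the color-swap generator to $\mathcal P_2,\mathcal P_2^{**},\mathcal{NC}_2$ ``cuts these categories down to their color-balanced sub-categories $\bar P_2,\bar P_2^*,\bar{NC}_2$''. This is backwards on both counts: adjoining a generator \emph{enlarges} a category, and $\bar P_2\supset\mathcal P_2$ (every matching pairing is balanced, not conversely), so these are super-categories, not sub-categories. Likewise, you say the quantum-group intersection $\mathbb{T}O_N^*=\mathbb{T}O_N^+\cap U_N^{**}$ ``at the categorical level becomes $\bar P_2^*=\bar P_2\cap P_2^*$''. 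But intersection of quantum groups corresponds to the category \emph{generated} by the two associated categories, here $\langle\bar{NC}_2,\mathcal P_2^{**}\rangle$; the identity $\bar P_2^*=\bar P_2\cap P_2^*$ is only the definition of $\bar P_2^*$, and the substantive claim to be proved is $\langle\bar{NC}_2,\mathcal P_2^{**}\rangle=\bar P_2^*$, exactly as the paper states. You acknowledge contravariance in your final paragraph but apply it the wrong way in the body, so the ``saturation step'' you flag as the main obstacle is not even set up correctly as written.
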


\begin{proof}
The idea is that of converting the defining relations for the quantum group into statements regarding certain operators of type $T_\pi$, and then computing the category of partitions generated by these defining partitions $\pi$. More precisely:

(1) First of all, the 4 results at the corners are known from Proposition 2.3. Also, the result for $O_N^*$ is known from \cite{bve}, and its unitary version, for $U_N^{**}$, is known from \cite{ba1}. We are therefore left with proving the 3 results corresponding to the middle rows.

(2) As a first observation here, the result is clear for $\mathbb TO_N$, and this, by using an elementary approach. Indeed, if we denote the standard corepresentation by $u=zv$, with $z\in\mathbb T$ and with $v=\bar{v}$, then in order to have $Hom(u^{\otimes k},u^{\otimes l})\neq\emptyset$, the $z$ variabes must cancel, and in the case where they cancel, we obtain the same Hom-space as for $O_N$. Now since the cancelling property for the $z$ variables corresponds precisely to the fact that $k,l$ must have the same numbers of $\circ$ symbols minus $\bullet$ symbols, the associated Tannakian category must come from the category of pairings $\bar{P}_2\subset P_2$, as claimed.

(3) In order to deal now with the free version $\mathbb TO_N^+$, no such shortcut is available here, and we must use the regular, abstract method. So, observe that the defining relations for this quantum group, namely $ab^*=a^*b$, correspond to the following diagram:
$$\xymatrix@R=12mm@C=8mm{
\circ\ar@{-}[d]&\bullet\ar@{-}[d]\\
\bullet&\circ}$$

Thus the associated category of partitions is $D=<\mathcal{NC}_2,|^{\hskip-1.32mm\circ}_{\hskip-1.32mm\bullet}|_{\hskip-1.32mm\circ}^{\hskip-1.32mm\bullet}>=\bar{NC}_2$, as claimed. 

(4) Finally, since we have $\mathbb TO_N^*=\mathbb TO_N^+\cap U_N^{**}$, here the associated category of partitions follows to be $D=<\bar{NC}_2,\mathcal P_2^{**}>=\bar{P}_2^*$, and this finishes the proof.
\end{proof}

Now back to the general case, our claim is that, for an easy geometry, there are a few simplifications in the axioms. We first have the following result:

\begin{proposition}
For a geometry which is easy, coming from a category of pairings ${\mathcal NC}_2\subset D\subset P_2$, the associated quantum group is given by
$$C(G)=C(U_N^+)\Big/\left<T_\pi\in Hom(u^{\otimes k},u^{\otimes l})\Big|\forall k,l,\forall\pi\in D(k,l)\right>$$
and the associated noncommutative torus is $T=\widehat{\Gamma}$, with:
$$\Gamma=F_N\Big/\left<g_{i_1}\ldots g_{i_k}=g_{j_1}\ldots g_{j_l}\Big|\forall i,j,k,l,\exists\pi\in D(k,l),\delta_\pi\begin{pmatrix}i\\ j\end{pmatrix}\neq0\right>$$
Moreover, in both cases, we can just use partitions $\pi$ which generate the category $D$.
\end{proposition}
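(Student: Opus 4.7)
The proof splits naturally according to the two claimed presentations. The plan is to obtain the presentation of $C(G)$ directly from the easiness hypothesis together with Woronowicz's Tannakian duality, and then to derive the presentation of $\Gamma$ by imposing the diagonal quotient $u_{ij}=0$ (for $i\neq j$) on the presentation of $C(G)$, using that by the axioms of Definition 1.5 the standard torus $T$ coincides with the diagonal torus $\mathcal{T}$ of $G$.

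For the quantum group part, the easiness hypothesis states that $\mathrm{Hom}(u^{\otimes k},u^{\otimes l})=\mathrm{span}(T_\pi\mid\pi\in D(k,l))$. The universal Woronowicz algebra generated by a biunitary $u$ subject to $T_\pi\in\mathrm{Hom}(u^{\otimes k},u^{\otimes l})$ for all $\pi\in D$ exists by the general construction recalled in Section 2 and, by Tannakian duality, its Tannakian category is precisely the tensor $C^*$-category spanned by the maps $T_\pi$, $\pi\in D$. Since this coincides with the Tannakian category of $G$, the two Woronowicz algebras are isomorphic, giving the stated presentation.

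For the torus, one writes out the relation $T_\pi u^{\otimes k}=u^{\otimes l} T_\pi$ entrywise as
$$\sum_{j_1,\ldots,j_l}\delta_\pi\begin{pmatrix}i_1&\ldots&i_k\\ j_1&\ldots&j_l\end{pmatrix}u_{j_1p_1}\ldots u_{j_lp_l}\,=\,\sum_{q_1,\ldots,q_k}u_{i_1q_1}\ldots u_{i_kq_k}\,\delta_\pi\begin{pmatrix}q_1&\ldots&q_k\\ p_1&\ldots&p_l\end{pmatrix}$$
and then sets $u_{ab}=0$ for $a\neq b$ and $g_a:=u_{aa}$. On the left only the term $j_a=p_a$ survives, producing $\delta_\pi\binom{i}{p}g_{p_1}\ldots g_{p_l}$; on the right only $q_a=i_a$ survives, producing $g_{i_1}\ldots g_{i_k}\,\delta_\pi\binom{i}{p}$. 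Hence, exactly when $\delta_\pi\binom{i}{p}\neq 0$, one reads off the word relation $g_{i_1}\ldots g_{i_k}=g_{p_1}\ldots g_{p_l}$, which matches the given presentation of $\Gamma$ once one invokes $T=\mathcal{T}$.

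For the last assertion, the set of partitions $\pi$ for which $T_\pi$ lies in the Tannakian category of $G$ is automatically stable under horizontal concatenation, vertical concatenation (with removal of closed loops) and upside-down turning, so it is a category of partitions; consequently, imposing the $T_\pi$-relation for any generating subfamily of $D$ forces it for all of $D$. An analogous closure argument applies on the torus side: horizontal concatenation corresponds to concatenation of the word equalities, vertical concatenation corresponds to substitution (the strings that get deleted amount to summing over matched middle indices, and so do not affect the $\delta_\pi$-condition on the remaining legs), and upside-down turning swaps the two sides of a word relation. I expect the one step that requires some genuine care to be this last closure argument on the torus, specifically verifying compatibility of vertical concatenation with loop removal at the level of word relations on $\Gamma$; but this is bookkeeping rather than a structural difficulty.
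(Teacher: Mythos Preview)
Your proof is correct and follows essentially the same route as the paper: Tannakian duality for the presentation of $C(G)$, then passage to the diagonal quotient $u_{ij}=0$ ($i\neq j$) and an explicit reading of the intertwiner relation $T_\pi\in\mathrm{Hom}(g^{\otimes k},g^{\otimes l})$ as word equalities in the $g_i$'s, using $T=\mathcal T$. The only difference is presentational: the paper evaluates $T_\pi g^{\otimes k}$ and $g^{\otimes l}T_\pi$ on basis vectors, whereas you write the same identity in matrix-entry form; and for the final ``generators suffice'' claim the paper simply invokes Tannakian duality in one line, while you spell out the closure under $\otimes$, $\circ$, $*$ on both the quantum-group and torus sides.
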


\begin{proof}
The first assertion is well-known, see \cite{bsp}, \cite{mal}. If we denote by $g_i=u_{ii}$ the standard coordinates on the associated torus $T$, then we have, with $g=diag(g_1,\ldots,g_N)$:
\begin{eqnarray*}
C(T)
&=&\left[C(U_N^+)\Big/\left<T_\pi\in Hom(u^{\otimes k},u^{\otimes l})\Big|\forall\pi\in D\right>\right]\Big/\left<u_{ij}=0\Big|\forall i\neq j\right>\\
&=&\left[C(U_N^+)\Big/\left<u_{ij}=0\Big|\forall i\neq j\right>\right]\Big/\left<T_\pi\in Hom(u^{\otimes k},u^{\otimes l})\Big|\forall\pi\in D\right>\\
&=&C^*(F_N)\Big/\left<T_\pi\in Hom(g^{\otimes k},g^{\otimes l})\Big|\forall\pi\in D\right>
\end{eqnarray*}

The associated discrete group, $\Gamma=\widehat{T}$, is therefore given by:
$$\Gamma=F_N\Big/\left<T_\pi\in Hom(g^{\otimes k},g^{\otimes l})\Big|\forall\pi\in D\right>$$

Now observe that, with $g=diag(g_1,\ldots,g_N)$, we have:
\begin{eqnarray*}
T_\pi g^{\otimes k}(e_{i_1}\otimes\ldots\otimes e_{i_k})&=&\sum_{j_1\ldots j_l}\delta_\pi\begin{pmatrix}i_1&\ldots&i_k\\ j_1&\ldots&j_l\end{pmatrix}e_{j_1}\otimes\ldots\otimes e_{j_l}\cdot g_{i_1}\ldots g_{i_k}\\
g^{\otimes l}T_\pi(e_{i_1}\otimes\ldots\otimes e_{i_k})&=&\sum_{j_1\ldots j_l}\delta_\pi\begin{pmatrix}i_1&\ldots&i_k\\ j_1&\ldots&j_l\end{pmatrix}e_{j_1}\otimes\ldots\otimes e_{j_l}\cdot g_{j_1}\ldots g_{j_l}
\end{eqnarray*}

We conclude that the relation $T_\pi\in Hom(g^{\otimes k},g^{\otimes l})$ reformulates as follows:
$$\delta_\pi\begin{pmatrix}i_1&\ldots&i_k\\ j_1&\ldots&j_l\end{pmatrix}\neq0\implies g_{i_1}\ldots g_{i_k}=g_{j_1}\ldots g_{j_l}$$

Thus we obtain the formula in the statement. Finally, the last assertion follows from Tannakian duality in the quantum group case, and then in the torus case as well.
\end{proof}

We conjecture that, in the case of an easy geometry, the category $D$ determines everything, and is determined by everything. Thus, in this case we should have full correspondences, between all the objects involved, with $D$ being the central object:
$$\xymatrix@R=5mm@C=20mm{
S\ar[rr]\ar[dr]\ar[dddr]&&G\ar[dl]\ar[dddl]\ar[ll]\\
&D\ar[dd]\ar[ul]\ar[ur]&\\
&&\\
&T\ar[uu]\ar[uuul]\ar[uuur]&}$$

Observe the similarity with the usual real and complex geometries, which have as well a central object, namely $\mathbb R^N,\mathbb C^N$. Thus, in a certain abstract sense, for an easy geometry, $D$ is the analogue of the ambient space $\mathbb R^N,\mathbb C^N$, which cannot be axiomatized. 

Here are now a few more abstract notions:

\begin{definition}
A noncommutative geometry is called:
\begin{enumerate}
\item Amenable, when the discrete quantum group $\widehat{G}$ is amenable.

\item Weakly amenable, when the discrete group $\Gamma=\widehat{T}$ is amenable.
\end{enumerate}
\end{definition}

Here we use the usual amenability notion for the discrete groups, and for the discrete quantum groups. For the general theory regarding this latter notion, see \cite{ntu}.

We should mention that, for all the known examples of noncommutative geometries in our sense, the coamenability of $G$ is equivalent to the coamenability of $T$. We conjecture that this should be true in general, but have no idea on how to prove this.

Let us discuss now more in detail the half-classical real geometry, associated to $O_N^*$. We recall that the projective version of a quantum subgroup $G\subset O_N^+$ is the quotient quantum group $G\to PG$ having $w_{ij,kl}=u_{ik}u_{jl}$ as fundamental corepresentation. In the classical case, $G\subset O_N$, we recover in this way the usual projective version.

We have the following uniqueness results:

\begin{theorem}
We have the following results:
\begin{enumerate}
\item $O_N^*$ is the unique easy quantum group $O_N\subset G\subset O_N^+$.

\item $O_N^*$ is maximal coamenable, in the easy framework.

\item $O_N^*$ is the biggest quantum group whose projective version $PO_N^*$ is classical.
\end{enumerate}
\end{theorem}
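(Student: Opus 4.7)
I will treat the three parts in sequence; $(1)$ and $(2)$ are essentially classification and standard amenability considerations, while the core of the theorem is the converse in $(3)$. For $(1)$, the plan is to invoke the classification of intermediate categories of pairings $NC_2\subset D\subset P_2$ established in \cite{bsp}, \cite{bve}: the only proper intermediate category is $P_2^*$, and the associated easy quantum group is $O_N^*$, as recorded in Proposition 2.5. For $(2)$, I would combine $(1)$ with two known facts: $O_N^+$ is not coamenable for $N\geq 3$ (via the Kesten-type analysis of its fusion semiring), while $O_N^*$ is coamenable (once $(3)$ is established, its projective version is a classical compact group, and the kernel of $O_N^*\to PO_N^*$ is a small classical extension through which amenability passes; alternatively, this is already in the literature via the matrix-model embedding of $C(O_N^*)$). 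Together with $(1)$, these two facts force $O_N^*$ to be the maximal coamenable easy quantum group inside $O_N^+$.

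For the direct implication in $(3)$, I would check that $PO_N^*$ is classical by a double application of the half-commutation relation to length-four words:
$$u_{ij}u_{kl}\cdot u_{ab}u_{cd}=u_{ij}\bigl(u_{kl}u_{ab}u_{cd}\bigr)=u_{ij}u_{cd}u_{ab}u_{kl}=\bigl(u_{ij}u_{cd}u_{ab}\bigr)u_{kl}=u_{ab}u_{cd}\cdot u_{ij}u_{kl},$$
so the entries of $u\otimes u$ commute.

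The main obstacle is the converse in $(3)$: given $G\subset O_N^+$ with all length-two words commuting, $[u_{ij}u_{kl},u_{ab}u_{cd}]=0$, I must derive the length-three half-commutation $abc=cba$ for the generators. My proposal is the following computation, exploiting both orthogonality relations for $u$. Pair commutation of $u_{mn}u_{ij}$ with $u_{kl}u_{mp}$ reads
$$u_{mn}u_{ij}u_{kl}u_{mp}=u_{kl}u_{mp}u_{mn}u_{ij};$$
summing over $m$ and collapsing the right-hand side via the column orthogonality $\sum_m u_{mp}u_{mn}=\delta_{pn}$ gives
$$\sum_m u_{mn}u_{ij}u_{kl}u_{mp}=\delta_{pn}\,u_{kl}u_{ij}.$$
I then multiply on the left by $u_{qn}$ and sum over $n$. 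The right-hand side becomes $u_{qp}u_{kl}u_{ij}$, while the row orthogonality $\sum_n u_{qn}u_{mn}=\delta_{qm}$ collapses the left-hand side to $u_{ij}u_{kl}u_{qp}$, yielding exactly
$$u_{ij}u_{kl}u_{qp}=u_{qp}u_{kl}u_{ij},$$
so $G\subset O_N^*$. The hard part is precisely locating this two-step collapse, with the two $m$-indices positioned at the outer ends so that orthogonality can fire twice and free the three inner generators, telescoping pair commutation into half-commutation; once this is found, parts $(1)$ and $(2)$ reduce to classification and standard amenability bookkeeping.
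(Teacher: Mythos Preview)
Your proposal is correct. Parts (1) and (2) match the paper's treatment exactly: invoke the classification of intermediate categories $NC_2\subset D\subset P_2$ from \cite{bve} for (1), and combine this with the known coamenability of $O_N^*$ for (2). For the direct implication in (3) your double application of half-commutation is identical to the paper's.

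For the converse in (3), you and the paper use the same underlying mechanism---insert a summed variable, apply pair commutation, collapse via the quadratic/orthogonality relations---but execute it differently. The paper works at the level of abstract coordinates $a,b,c,d$ with $\sum_d d^2=1$, computing
$$abc=\sum_d abcdd=\sum_d cdabd=\sum_d cbdda=cba,$$
which uses pair commutation twice ($abcd=cdab$, then $dabd=bdda$) and the quadratic relation twice. Your version uses pair commutation once together with the two biunitary relations $\sum_m u_{mp}u_{mn}=\delta_{pn}$ and $\sum_n u_{qn}u_{mn}=\delta_{qm}$. The paper's computation is a touch shorter and, because it only requires $\sum_d d^2=1$, applies verbatim to the sphere $S^{N-1}_{\mathbb R,*}$ and not just to quantum subgroups of $O_N^+$; your computation is specific to the quantum group setting but makes the role of the orthogonality relations fully explicit. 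Both are valid.
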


\begin{proof}
These results are well-known, but we remind here the ideas of the proofs, because these will serve as inspiration for various unitary generalizations, to be done below:

(1) This result is from \cite{bve}, the idea being that $P_2^*$ is the unique intermediate category of partitions $NC_2\subset D\subset P_2$. We should mention here that, conjecturally, $O_N^*$ is the unique quantum group $O_N\subset G\subset O_N^+$, even without the easiness assumption. See \cite{bbc}.

(2) The precise claim here is that $O_N^*$ is coamenable, and is in addition maximal with this coamenability property, in the easy quantum group framework. Regarding the coamenability, this is known from \cite{bve}. As for the maximality claim, this follows from (1).

(3) This is well-known as well, since \cite{bve}. Indeed, the relations $abc=cba$ are equivalent to the relations $abcd=cdab$, as shown by the following two computations:
$$\left[abc=cba\right]\implies\left[abcd=cbad=cdab\right]$$
$$\left[abcd=cdab\right]\implies\left[abc=\sum_dabcdd=\sum_dcdabd=\sum_dcbdda=cba\right]$$

Here we assume that all the variables are standard coordinates, and we have used the quadratic condition relating these coordinates, namely $\sum_dd^2=1$.
\end{proof}

\section{Complex geometries}

In this section and in the next one we discuss the construction of the complex half-classical geometry. We will proceed in two steps:

(1) In this section we discuss a first extension of the $U_N^{**}$ geometry, with unitary group denoted $U_N^\times$. This quantum group is the one constructed in \cite{bdd}, \cite{bd+}, and denoted $U_N^*$ there. The problem, however, is that this geometry is not amenable.

(2) In section 4 we construct and study the ``correct'' complex analogue of the $O_N^*$ geometry, with unitary group denoted $U_N^*$. This quantum group is the one constructed in \cite{bbi}, and denoted $U_{N,\infty}$ there. The enlarged picture will look as follows:
$$\xymatrix@R=15mm@C=16mm{
U_N\ar[r]&U_N^{**}\ar[r]&U_N^*\ar[r]&U_N^\times\ar[r]&U_N^+\\
\mathbb TO_N\ar[rr]\ar[u]&&\mathbb TO_N^*\ar[rr]\ar[u]\ar[ul]\ar[ur]&&\mathbb TO_N^+\ar[u]\\
O_N\ar[rr]\ar[u]&&O_N^*\ar[rr]\ar[u]&&O_N^+\ar[u]}$$

In order to get started, our first task is to look for extensions of the $U_N^{**}$ geometry. This geometry is by definition easy, coming from the following diagrams:
$$\xymatrix@R=10mm@C=5mm{
\circ\ar@{-}[drr]&\circ\ar@{-}[d]&\circ\ar@{-}[dll]
\\
\circ&\circ&\circ}\quad\ \qquad
\xymatrix@R=10mm@C=5mm{
\circ\ar@{-}[drr]&\circ\ar@{-}[d]&\bullet\ar@{-}[dll]
\\
\bullet&\circ&\circ}\quad\ \qquad
\xymatrix@R=10mm@C=5mm{
\circ\ar@{-}[drr]&\bullet\ar@{-}[d]&\circ\ar@{-}[dll]
\\
\circ&\bullet&\circ}\quad\ \qquad
\xymatrix@R=10mm@C=5mm{
\bullet\ar@{-}[drr]&\circ\ar@{-}[d]&\circ\ar@{-}[dll]
\\
\circ&\circ&\bullet}$$

$$\xymatrix@R=10mm@C=5mm{
\circ\ar@{-}[drr]&\bullet\ar@{-}[d]&\bullet\ar@{-}[dll]
\\
\bullet&\bullet&\circ}\quad\ \qquad
\xymatrix@R=10mm@C=5mm{
\bullet\ar@{-}[drr]&\circ\ar@{-}[d]&\bullet\ar@{-}[dll]
\\
\bullet&\circ&\bullet}\quad\ \qquad
\xymatrix@R=10mm@C=5mm{
\bullet\ar@{-}[drr]&\bullet\ar@{-}[d]&\circ\ar@{-}[dll]
\\
\circ&\bullet&\bullet}\quad\ \qquad
\xymatrix@R=10mm@C=5mm{
\bullet\ar@{-}[drr]&\bullet\ar@{-}[d]&\bullet\ar@{-}[dll]
\\
\bullet&\bullet&\bullet}$$

These diagrams stand for the relations $abc=cba$, $abc^*=c^*ba$, and so on, up to $a^*b^*c^*=c^*b^*a^*$. For more about such pictures and relations, we refer to \cite{twe}.

There are some obvious equivalences between these relations, and by erasing the corresponding diagrams, we are led to three diagrams, namely:
$$\xymatrix@R=10mm@C=5mm{
\circ\ar@{-}[drr]&\circ\ar@{-}[d]&\circ\ar@{-}[dll]
\\
\circ&\circ&\circ}\qquad\ \qquad
\xymatrix@R=10mm@C=5mm{
\circ\ar@{-}[drr]&\bullet\ar@{-}[d]&\circ\ar@{-}[dll]
\\
\circ&\bullet&\circ}\qquad\ \qquad
\xymatrix@R=10mm@C=5mm{
\circ\ar@{-}[drr]&\circ\ar@{-}[d]&\bullet\ar@{-}[dll]
\\
\bullet&\circ&\circ}$$

In order to extend now the $U_N^{**}$ geometry, the idea would be that of picking one of these diagrams, and using the corresponding relations. But here, we have:

\begin{proposition}
Consider the following types of relations, between abstract variables $a,b,c\in\{x_i\}$ subject to the relations $\sum_ix_ix_i^*=\sum_ix_i^*x_i=1$:
\begin{enumerate}
\item $abc=cba$.

\item $ab^*c=cb^*a$

\item $abc^*=c^*ba$.
\end{enumerate}
We have then $(1)\iff(3)\implies(2)$.
\end{proposition}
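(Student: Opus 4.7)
The plan splits into the equivalence $(1)\iff(3)$, which carries the main content, and the implication $(3)\Rightarrow(2)$, which is quick once both $(1)$ and $(3)$ are available.

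For $(3)\Rightarrow(2)$: Taking the adjoint of $(3)$ gives $a^*b^*c=cb^*a^*$ for all coordinates $a,b,c$; call this $(3)^*$. I would insert the sphere resolution $1=\sum_d x_dx_d^*$ between $a$ and $b^*c$ in the expression $ab^*c$, and apply $(3)^*$ to the resulting substring $x_d^*b^*c=cb^*x_d^*$ to obtain
\[
ab^*c\;=\;\sum_d a x_d\,cb^*x_d^*.
\]
The identical manipulation on $cb^*a$ yields $cb^*a=\sum_d c x_d\,ab^*x_d^*$. The desired equality then reduces, termwise, to $ax_dc=cx_da$, which is exactly $(1)$ applied to the triple $(a,x_d,c)$. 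Since $(1)$ is equivalent to $(3)$, this instance is available, and $(2)$ follows.

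The equivalence $(1)\iff(3)$ is most transparent diagrammatically. Interpreted as colored pairings in $P_2^*$, relation $(1)$ is the all-white half-classical crossing $\circ\circ\circ/\circ\circ\circ$, while $(3)$ is the same crossing recolored as $\circ\circ\bullet/\bullet\circ\circ$. Rotating the top-left leg of the first diagram down (flipping its color to $\bullet$), together with rotating the bottom-right leg up (also flipping color), produces the second diagram; both rotations are implemented by compositions with the color-matched cap/cup pairings from $\mathcal{NC}_2$, hence already available in the ambient category. At the algebra level these rotations translate into coordinated sphere-relation insertions, so the relations $(1)$ and $(3)$, together with the sphere relations, define the same quotient algebra.

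The main obstacle is making the sphere-insertion step for $(1)\iff(3)$ precise at the algebra level without leaning on the Tannakian viewpoint. A naive single insertion, e.g.\ $abc^*=\sum_d a b x_d x_d^* c^*=\sum_d x_d ba x_d^* c^*$ via $(1)$ as $abx_d=x_dba$, tends to collapse into a tautology once one tries to match it against the analogous expansion of $c^*ba$, because both sides admit the same sphere expansion and further uses of $(1)$ merely reduce each back to itself. The correct move is a coordinated double insertion that mirrors the two-leg diagrammatic rotation above; getting this bookkeeping right is the delicate step, after which the remaining implications reduce to the clean calculation already described.
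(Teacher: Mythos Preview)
Your proposal is correct and follows essentially the same route as the paper: the equivalence $(1)\iff(3)$ is argued diagrammatically via leg rotations implemented by the $\mathcal{NC}_2$ caps and cups, and $(1{+}3)\Rightarrow(2)$ is a sphere-insertion computation (the paper inserts $\sum_d dd^*$ on the right and chains $(3),(1),(3)$ to get $ab^*c=\sum_d ab^*cdd^*=\sum_d adcb^*d^*=\sum_d cdab^*d^*=\sum_d cb^*add^*=cb^*a$, while you insert in the middle and use $(3)^*$ then $(1)$ --- a cosmetic variant). Your final paragraph's worry about an explicit algebraic verification of $(1)\iff(3)$ is unnecessary: the paper does not supply one either, and the diagrammatic rotation argument you already gave is exactly what it uses.
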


\begin{proof}
The equivalence $(1)\iff(3)$ follows from the following computations:
$$\xymatrix@R=10mm@C=5mm{
\bullet\ar@{-}[d]\ar@/^/@{-}[r]&\circ\ar@{-}[drr]&\circ\ar@{-}[d]&\circ\ar@{-}[dll]&\bullet\ar@{-}[d]\\
\bullet&\circ&\circ&\circ\ar@/_/@{-}[r]&\bullet}\quad
\xymatrix@R=4mm@C=6mm{&\\ =\\&\\& }
\xymatrix@R=10mm@C=6mm{
\circ\ar@{-}[drr]&\circ\ar@{-}[d]&\bullet\ar@{-}[dll]\\
\bullet&\circ&\circ}$$

$$\xymatrix@R=10mm@C=5mm{
\circ\ar@{-}[d]&\circ\ar@{-}[drr]&\circ\ar@{-}[d]&\bullet\ar@{-}[dll]\ar@/^/@{-}[r]&\circ\ar@{-}[d]\\
\circ\ar@/_/@{-}[r]&\bullet&\circ&\circ&\circ}\quad
\xymatrix@R=4mm@C=6mm{&\\ =\\&\\& }
\xymatrix@R=10mm@C=6mm{
\circ\ar@{-}[drr]&\circ\ar@{-}[d]&\circ\ar@{-}[dll]\\
\circ&\circ&\circ}$$

As for $(1+3)\implies(2)$, this is best worked out at the algebraic level, as follows:
$$ab^*c=\sum_dab^*cdd^*=\sum_dadcb^*d^*=\sum_dcdab^*d^*=\sum_dcb^*add^*=cb^*a$$

Thus we have indeed $(1)\iff(3)\implies(2)$, as claimed.
\end{proof}

Now by getting back to our problem, and more specifically, to our above-mentioned idea of using one diagram out of 3 possible ones, we can see, as a consequence of Proposition 3.1, that we have only one good choice, and are led to the following definition:

\begin{definition}
We have a sphere, a torus, and a quantum group, as follows:
\begin{enumerate}
\item $S^{N-1}_{\mathbb C,\times}\subset S^{N-1}_{\mathbb C,+}$, obtained via $ab^*c=cb^*a$, with $a,b,c\in\{x_i\}$.

\item $T_N^\times\subset\widehat{F_N}$, obtained via $ab^{-1}c=cb^{-1}a$, with $a,b,c\in\{g_i\}$.

\item $U_N^\times\subset U_N^+$, obtained via $ab^*c=cb^*a$, with $a,b,c\in\{u_{ij}\}$.
\end{enumerate}
\end{definition}

Observe that $U_N^\times$ is indeed a quantum group, containing $U_N^{**}$, and this because we are imposing to the standard coordinates of $U_N^+$ certain easy relations. It is of course possible to check Woronowicz's axioms in \cite{wo1} as well, directly. We will prove later on that we have indeed a noncommutative geometry, in the sense of Definition 1.5 above.

Let us clarify now a few more algebraic issues. The most elegant approach to the $U_N^\times$ geometry is in fact via projective space theory, using the following notions:

\begin{definition}
Given a subspace $X\subset S^{N-1}_{\mathbb C,+}$, we define quotients as follows,
\begin{enumerate}
\item Left projective version: $X\to PX$, with coordinates $p_{ij}=x_ix_j^*$,

\item Right projective version: $X\to P'X$, with coordinates $q_{ij}=x_j^*x_i$,

\item Full projective version: $X\to\mathcal PX$, with coordinates $p_{ij},q_{ij}$,
\end{enumerate}
and we say that $X$ is left/right/full half-classical when these spaces are classical.
\end{definition}

Observe that in the classical case, $X\subset S^{N-1}_\mathbb C$, the three projective versions coincide, and equal the usual projective version, obtained by dividing under the action of $\mathbb T$.

In the real case, $X\subset S^{N-1}_{\mathbb R,+}$, the three projective versions coincide as well, and $X$ is left or right half-classical when $X\subset S^{N-1}_{\mathbb R,*}$. This follows indeed from Theorem 2.8.

In relation now with $S^{N-1}_{\mathbb C,\times},S^{N-1}_{\mathbb C,**}$, we can use the following simple fact, from \cite{bbi}:

\begin{proposition}
Let $X\subset S^{N-1}_{\mathbb C,+}$, with coordinates $x_1,\ldots,x_N$.
\begin{enumerate}
\item $X\subset S^{N-1}_{\mathbb C,\times}$ precisely when $\{x_ix_j^*\}$ commute, and $\{x_i^*x_j\}$ commute as well.

\item $X\subset S^{N-1}_{\mathbb C,**}$ precisely when the variables $\{x_ix_j,x_ix_j^*,x_i^*x_j,x_i^*x_j^*\}$ all commute.
\end{enumerate}
\end{proposition}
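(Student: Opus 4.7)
My plan is to prove both parts by a symmetric forward/backward strategy. In each case, the forward direction (triple half-commutation implies quadratic commutation) is a short diagrammatic computation using the given relations together with their adjoints. The backward direction is more delicate: it requires inserting a resolution of identity coming from the sphere relations $\sum_l x_l x_l^* = \sum_l x_l^* x_l = 1$, and then alternating commutations of different types of quadratics to slide variables past one another.

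For Part (1), forward: starting from the defining relation $ab^*c = cb^*a$, I take adjoints to obtain the companion identity $a^*bc^* = c^*ba^*$. Then to prove $[x_i x_j^*, x_k x_l^*] = 0$, I apply the first relation to rewrite $x_i x_j^* x_k = x_k x_j^* x_i$, and then the adjoint relation to rewrite $x_j^* x_i x_l^* = x_l^* x_i x_j^*$; chaining these two swaps gives $x_i x_j^* x_k x_l^* = x_k x_l^* x_i x_j^*$. The commutation within $\{x_i^* x_j\}$ follows by the same pattern with the roles of the two relations reversed.

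For Part (1), backward: assuming $\{x_i x_j^*\}$ commute and $\{x_i^* x_j\}$ commute, I insert $\sum_l x_l^* x_l = 1$ on the right of $x_i x_j^* x_k$ and compute
\[
x_i x_j^* x_k = \sum_l (x_i x_j^*)(x_k x_l^*) x_l = \sum_l (x_k x_l^*)(x_i x_j^*) x_l = \sum_l x_k (x_l^* x_i)(x_j^* x_l),
\]
then commute the two $x^*x$-type quadratics and collapse $\sum_l x_l x_l^* = 1$ in the middle to land on $x_k x_j^* x_i$. This is the key computation, and every later step reuses it.

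For Part (2), the forward direction is an analogous case analysis using the full family $abc = cba$ with $a,b,c \in \{x_i, x_i^*\}$, each rearrangement of a product of two quadratics needing at most two applications. The backward direction reuses Part (1) to recover $ab^*c = cb^*a$; to get the purely holomorphic relation $x_i x_j x_k = x_k x_j x_i$ and the mixed one $x_i x_j x_k^* = x_k^* x_j x_i$, I insert $\sum_l x_l^* x_l = 1$ on the right and repeat the two-swap/collapse pattern, but now each swap crosses \emph{different} families of quadratics --- which is where the assumption of full mutual commutation among the four sets $\{x_i x_j\}, \{x_i x_j^*\}, \{x_i^* x_j\}, \{x_i^* x_j^*\}$ is crucial. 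The main obstacle I expect is the bookkeeping in this final step: one must pick the correct resolution of identity ($\sum_l x_l^* x_l = 1$ or $\sum_l x_l x_l^* = 1$) for each adjoint pattern in $\{x_i, x_i^*\}^3$ and verify that each intermediate swap is licensed by the hypothesis. Once the pattern from Part (1) is internalized, the remaining verifications are uniform.
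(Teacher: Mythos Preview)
Your proposal is correct and matches the paper's approach in all essentials: forward by chaining two half-commutations, backward by inserting a resolution of identity $\sum_e e^*e$ (or $\sum_e ee^*$) and alternating commutations of the two quadratic families to collapse the sum. The only organizational difference is that for part (2) backward the paper treats all adjoint patterns at once, running the single computation $ae^*ebc=abce^*e=ce^*abe=cbee^*a$ with $a,b,c\in\{x_i,x_i^*\}$ and $e\in\{x_i\}$, whereas you split into the holomorphic and mixed cases; both routes work and neither is materially shorter.
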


\begin{proof}
Regarding the first assertion, the implication ``$\implies$'' follows from:
$$ab^*cd^*=cb^*ad^*=cd^*ab^*\quad,\quad a^*bc^*d=c^*ba^*d=c^*da^*b$$

As for the implication ``$\Longleftarrow$'', this is obtained as follows, by using the commutation assumptions in the statement, and by summing over $e=x_i$:
$$ae^*eb^*c=ab^*ce^*e=ce^*ab^*e=cb^*ee^*a\implies ab^*c=cb^*a$$

The proof of the second assertion is similar, because we can remove all the $*$ signs, except for those concerning $e^*$, and use the above computations with $a,b,c,d\in\{x_i,x_i^*\}$.
\end{proof}

With the above result in hand, we can now formulate:

\begin{proposition}
We have the following results:
\begin{enumerate}
\item $S^{N-1}_{\mathbb C,\times}$ is left and right half-classical, and is maximal with this property.

\item $S^{N-1}_{\mathbb C,**}$ is fully half-classical.

\item $S^{N-1}_{\mathbb R,*}$ is fully half-classical, and is maximal inside $S^{N-1}_{\mathbb R,+}$ with this property.
\end{enumerate}
\end{proposition}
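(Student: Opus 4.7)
The plan is to read off (1) and (2) more or less directly from Proposition 3.4, and for (3) to reduce the full projective classicality to a single-layer commutation condition that can be matched with the half-commutation relation defining $S^{N-1}_{\mathbb R,*}$ via the computation already used in Theorem 2.8(3).

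For (1), note that the left and right projective versions of $X\subset S^{N-1}_{\mathbb C,+}$ are, by definition, classical precisely when the families $\{p_{ij}\}=\{x_ix_j^*\}$ and $\{q_{ij}\}=\{x_j^*x_i\}$ are each pairwise commuting. This is exactly the pair of conditions that Proposition 3.4(1) identifies with the inclusion $X\subset S^{N-1}_{\mathbb C,\times}$. So $S^{N-1}_{\mathbb C,\times}$ is both left and right half-classical, and conversely any $X\subset S^{N-1}_{\mathbb C,+}$ with this property satisfies the defining relations of $S^{N-1}_{\mathbb C,\times}$, giving the maximality.

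For (2), one applies Proposition 3.4(2): on $S^{N-1}_{\mathbb C,**}$ the whole union $\{x_ix_j,\,x_ix_j^*,\,x_i^*x_j,\,x_i^*x_j^*\}$ is a single pairwise commuting set. Since this union contains both the $p_{ij}$ and the $q_{ij}$, and any two elements of the union commute (including across the two subfamilies), the full projective version $\mathcal PS^{N-1}_{\mathbb C,**}$, generated by $\{p_{ij}\}\cup\{q_{ij}\}$, is commutative.

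For (3), the self-adjointness $x_i=x_i^*$ collapses all four families into $\{x_ix_j\}$, so left, right and full half-classicality of $X\subset S^{N-1}_{\mathbb R,+}$ all amount to the single condition that the coordinates $x_ix_j$ pairwise commute. Inside $S^{N-1}_{\mathbb R,*}$, the half-commutation $abc=cba$ gives $abcd=cbad=cdab$, which is exactly that commutation, so $S^{N-1}_{\mathbb R,*}$ is fully half-classical. For maximality, suppose $X\subset S^{N-1}_{\mathbb R,+}$ has $abcd=cdab$ for all standard coordinates $a,b,c,d$; the computation
$$abc=\sum_d abcdd=\sum_d cdabd=\sum_d cbdda=cba,$$
borrowed verbatim from the proof of Theorem 2.8(3) (the middle step uses the quadratic commutation $(da)(bd)=(bd)(da)$, and the outer steps use $\sum_d d^2=1$), forces $X\subset S^{N-1}_{\mathbb R,*}$.

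The main obstacle is essentially cosmetic: there is no genuinely hard step, since Proposition 3.4 and the Theorem 2.8(3) computation do all the work. The one point that warrants care is the last computation in (3), where it must be checked that only quadratic commutations (not the full $abc=cba$) are invoked, so that it really produces an implication from half-classicality of the projective version back to the defining relation of $S^{N-1}_{\mathbb R,*}$.
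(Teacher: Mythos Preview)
Your argument is correct and matches the paper's approach: the paper's proof is a single line, ``All these assertions follow indeed from Proposition 3.4 above,'' and your write-up simply unpacks that reference. For (3) you cite the $abc=cba \Leftrightarrow abcd=cdab$ computation from Theorem 2.8(3) rather than invoking Proposition 3.4(2) in its real specialization, but these are the same manipulation (in the real case Proposition 3.4(2) reduces to exactly that equivalence), so there is no substantive difference.
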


\begin{proof}
All these assertions follow indeed from Proposition 3.4 above.
\end{proof}

We still have an issue to be clarified, namely that of proving that, in the diagram drawn in the beginning of this section, $U_N^\times$ sits indeed above $\mathbb TO_N^*$. But this comes from:

\begin{proposition}
We have $\mathbb TO_N^+\cap U_N^\times=\mathbb TO_N^*$, as quantum subgroups of $U_N^+$.
\end{proposition}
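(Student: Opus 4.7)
The inclusion $\mathbb TO_N^* \subseteq \mathbb TO_N^+ \cap U_N^\times$ is immediate: by definition $\mathbb TO_N^* = \mathbb TO_N^+ \cap U_N^{**}$, and the $U_N^{**}$-relation $abc = cba$ (for $a,b,c \in \{u_{ij}, u_{ij}^*\}$) specialises, on replacing $b$ by $b^*$, to the defining relation $ab^*c = cb^*a$ of $U_N^\times$; hence $U_N^{**} \subset U_N^\times$, and intersecting with $\mathbb TO_N^+$ yields the claimed inclusion.

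For the converse $\mathbb TO_N^+ \cap U_N^\times \subseteq \mathbb TO_N^*$, the plan is to work inside the universal $C^*$-algebra presented by the $U_N^+$-relations together with (A) $ab^* = a^*b$ and (B) $ab^*c = cb^*a$, and to derive from (A) and (B) the half-commutation $\alpha\beta\gamma = \gamma\beta\alpha$ for standard coordinates $\alpha,\beta,\gamma \in \{u_{ij}\}$. I would start from (B) applied to $(\alpha,\beta,\gamma)$, multiply both sides on the left by an arbitrary fourth standard generator $\delta$, and then apply (A) four times in succession to remove all the stars: first on the subproducts $\alpha\beta^* = \alpha^*\beta$ and $\gamma\beta^* = \gamma^*\beta$ (shifting the middle star off $\beta$), then on $\delta\alpha^* = \delta^*\alpha$ and $\delta\gamma^* = \delta^*\gamma$ (shifting the resulting stars onto $\delta$). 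Each step is an (A)-rewriting that preserves the equality, and the outcome is
\[
\delta^*\alpha\beta\gamma \;=\; \delta^*\gamma\beta\alpha.
\]
Multiplying once more on the left by $\delta$ and writing $f_\delta := \delta\delta^* = \delta^*\delta$ (normality of $\delta$ comes from (A) with $a=b=\delta$), this reads $f_\delta(\alpha\beta\gamma - \gamma\beta\alpha) = 0$. A direct use of (B) on the special triple $(\delta,\delta,\mu)$ gives $f_\delta\mu = \mu f_\delta$ for every standard $\mu$, so each $f_\delta$ is central; hence, taking $\delta = u_{i1}$ and summing over $i$, the $U_N^+$-unitarity identity $\sum_i u_{i1}^*u_{i1} = 1$ yields $\sum_i f_{u_{i1}} = 1$, and one reads off $\alpha\beta\gamma = \gamma\beta\alpha$ directly from the chain $0 = \sum_i f_{u_{i1}}(\alpha\beta\gamma-\gamma\beta\alpha) = \alpha\beta\gamma - \gamma\beta\alpha$.

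Once this ``star-free'' half-commutation is in hand, the remaining $U_N^{**}$-relations $x_1 x_2 x_3 = x_3 x_2 x_1$, with $x_r \in \{u_{i_r j_r}, u_{i_r j_r}^*\}$, all follow automatically: relation (A) allows any star in a length-three product to be shifted to any other of the three positions without altering the value, so each of the eight possible star-patterns collapses onto the star-free case just proved (or, via adjoints, onto it). This establishes $\mathbb TO_N^+ \cap U_N^\times \subseteq \mathbb TO_N^+ \cap U_N^{**} = \mathbb TO_N^*$ and closes the proof. No serious obstacle is anticipated; the whole argument is the unitary-group counterpart of the sphere-level implication $(3)\Rightarrow(1)$ carried out in Proposition 3.1, with the sphere identity $\sum_d d d^* = 1$ now playing its role through the row-unitarity $\sum_i u_{i1}^* u_{i1} = 1$, and the essential mechanism being the double use of (A): first as an ``un-starring'' device that converts (B) into a relation between unstarred triples up to a factor $\delta^*$, then as the source of the central positive element $f_\delta$ to be resolved against $1$.
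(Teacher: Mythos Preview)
Your proof is correct and uses the same core idea as the paper: both combine (A), (B) and a unitarity summation to obtain the star-free relation $abc=cba$, after which the remaining $U_N^{**}$-relations follow. The paper outsources the unitarity step to Proposition~3.1 (whose diagrammatic $(3)\Leftrightarrow(1)$ argument is precisely a cap/cup, i.e.\ unitarity, manipulation), whereas you unfold it explicitly through the auxiliary generator $\delta$ and the column relation $\sum_i u_{i1}u_{i1}^*=1$; your version is thus more self-contained. Two minor points: the centrality of $f_\delta$ is true but superfluous, since $\sum_i f_{u_{i1}}=1$ already kills the obstruction on the left without any commutation needed; and in your final paragraph, note that (A) preserves the total number of stars, so the one-star patterns do not collapse onto the star-free case but onto (B) itself --- which is among your hypotheses, so the conclusion is unaffected.
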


\begin{proof}
According to the definition of $\mathbb TO_N^*$, from section 1, this quantum group appears as $\mathbb TO_N^*=\mathbb TO_N^+\cap U_N^{**}$. Thus, we must  prove that we have $\mathbb TO_N^+\cap U_N^\times\subset U_N^{**}$.

In terms of defining relations, we must prove that, from $ab^*=a^*b$ and $ab^*c=cb^*a$ for any $a,b,c\in\{u_{ij}\}$, we can deduce that we have $abc=cba$, for any $a,b,c\in\{u_{ij},u_{ij}^*\}$.

But this is clear, because by using $ab^*=a^*b$, we can first obtain $a^*bc=cba^*$, and then, by using Proposition 3.1, we can obtain from this the other relations as well.

Here we have used the fact that what we know about abstract variables satisfying $\sum_ix_ix_i^*=\sum_ix_i^*x_i=1$ applies to the coordinates to any closed subgroup $G\subset U_N^+$, simply because these coordinates, when rescaled by $\sqrt{N}$, do satisfy these relations.
\end{proof}

We recall that the free complexification of a compact quantum group $G$, with standard coordinates denoted $v_{ij}$, is the compact quantum group $\widetilde{G}$ corresponding to the subalgebra $C(\widetilde{G})\subset C(\mathbb T)*C(G)$ generated by the variables $u_{ij}=zv_{ij}$, where $z$ is the standard generator of $C(\mathbb T)$. Observe that $\widetilde{G}$ is indeed a quantum group, because it appears as a subgroup of $\mathbb T\,\hat{*}\,G$, the quantum group associated to $C(\mathbb T)*C(G)$. See \cite{rau}.

Following \cite{ba3}, we have the following result:

\begin{proposition}
The quantum groups $O_N^*,U_N,U_N^{**},U_N^\times$ have the following properties:
\begin{enumerate}
\item They have the same left projective version, equal to $PU_N$.

\item They have the same free complexification, equal to $U_N^\times$.
\end{enumerate} 
\end{proposition}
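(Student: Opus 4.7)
My plan is to prove (2) first and then deduce (1) as a corollary via the general identity $PG=P\widetilde{G}$. The engine of the whole proof is a single computation inside $C(\mathbb{T})*C(G)$: for the standard coordinates $v_{ij}$ of $G$ and the generator $z$ of $C(\mathbb{T})$, one has
\[
(zv_{ij})(v_{kl}^*\bar z)(zv_{mn})=zv_{ij}v_{kl}^*v_{mn}
\]
by $\bar zz=1$, so that the defining relation $ab^*c=cb^*a$ of $U_N^\times$ for $u_{ij}=zv_{ij}$ reduces to $v_{ij}v_{kl}^*v_{mn}=v_{mn}v_{kl}^*v_{ij}$ in $C(G)$. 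This last identity is easy to verify in each of the four cases: trivially for $U_N$, from self-adjointness combined with $abc=cba$ for $O_N^*$, and directly from the defining relations for $U_N^{**}$ and $U_N^\times$.

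For part (2), the above gives a surjection $C(U_N^\times)\twoheadrightarrow C(\widetilde{G})$ in each case. I would close the case $G=U_N^\times$ by constructing the inverse via the character $C(\mathbb{T})\to\mathbb{C}$, $z\mapsto 1$, extended by $v_{ij}\mapsto u_{ij}$: this yields a surjection $C(\widetilde{U_N^\times})\twoheadrightarrow C(U_N^\times)$ and forces $\widetilde{U_N^\times}=U_N^\times$. The other three cases would then follow from the sandwiches $\widetilde{U_N}\subseteq\widetilde{U_N^{**}}\subseteq\widetilde{U_N^\times}=U_N^\times$ and $\widetilde{O_N}\subseteq\widetilde{O_N^*}\subseteq U_N^\times$, provided we establish the two converses $\widetilde{U_N}\supseteq U_N^\times$ and $\widetilde{O_N}\supseteq U_N^\times$.

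Once (2) is in hand, part (1) follows from the identity $PG=P\widetilde{G}$, which holds because any product of projective generators $zv_{ik}v_{jl}^*\bar z$ telescopes, by $\bar zz=1$, into $z\cdot(\prod v_{ik}v_{jl}^*)\cdot\bar z$, so that every relation in $C(PG)$ lifts verbatim to $C(P\widetilde{G})$. Combined with (2), we would then have $PG=P\widetilde{G}=PU_N^\times=P\widetilde{U_N}=PU_N$, where the last equality applies (2) with $G=U_N$. Alternatively one verifies this last step directly: by Proposition 3.4 the $\{u_{ia}u_{jb}^*\}$ pairwise commute in $C(U_N^\times)$, so $PU_N^\times$ is a classical compact group containing $PU_N$, and at the level of easy categories the half-classical crossing imposes no extra relation beyond those already holding in $\mathcal{P}_2$ once restricted to the projective alternating coloring.

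The main obstacle is the converse $\widetilde{U_N}\supseteq U_N^\times$, and the parallel statement $\widetilde{O_N}\supseteq U_N^\times$, i.e., verifying that no relation beyond $ab^*c=cb^*a$ is imposed on the generators $zv_{ij}$ inside $C(\mathbb{T})*C(U_N)$. Following \cite{ba3}, the natural route is Tannakian: one computes the intertwiner category of $\widetilde{U_N}$ as the subcategory of $\mathcal{P}_2$ consisting of those partitions whose net $z$-degrees match between the two rows after telescoping $\bar zz=1$, and checks that this subcategory coincides with the category $D^\times\subset\mathcal{P}_2$ generated by $\mathcal{NC}_2$ and the half-classical crossing colored $\circ\bullet\circ/\circ\bullet\circ$ that defines $U_N^\times$.
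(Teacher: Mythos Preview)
Your plan inverts the paper's order: you attack (2) first and then derive (1) via the identity $PG=P\widetilde{G}$, whereas the paper proves (1) directly and feeds it into (2). The inclusion $\widetilde{G}\subset U_N^\times$ is the same telescoping computation in both, your $z\mapsto1$ argument for $\widetilde{U_N^\times}=U_N^\times$ is correct, and your deduction $PG=P\widetilde{G}$ is fine.

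However, the proposal halts at the hard step. The converses $U_N^\times\subset\widetilde{U_N}$ and $U_N^\times\subset\widetilde{O_N}$ carry essentially all of the content, and you only outline a Tannakian route without carrying it out. Identifying the intertwiner category of $\widetilde{U_N}$ inside the \emph{free} product $C(\mathbb T)*C(U_N)$ and matching it against the category $D^\times$ defining $U_N^\times$ is real work: the cancellation $\bar zz=1$ that makes the easy direction immediate only telescopes alternating words, so for general colored words the computation is not as direct as your sketch suggests. Routing through $\widetilde{O_N}$ rather than $\widetilde{O_N^*}$ also adds a claim not in the statement, and one that is no easier to check.

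The paper sidesteps this computation. It first proves (1) by observing, via Proposition 3.4, that $PU_N^\times$ is classical and hence contained in $(PU_N^+)_{class}=PU_N$, this last equality being taken from \cite{ba3}, \cite{bdd}, \cite{bd+}; the reverse inclusion $PU_N\subset PU_N^\times$ is obvious. With (1) in hand, the paper invokes Raum's structural result \cite{rau} that each of $\widetilde{G}$ and $U_N^\times$ must arise as the free complexification of some easy $O_N\subset H\subset O_N^+$; since $PH=P\widetilde{H}=PU_N$ by (1), the uniqueness in Theorem 2.8(1) forces $H=O_N^*$, whence $\widetilde{G}=\widetilde{O_N^*}=U_N^\times$. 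So the paper trades your explicit category-matching for soft input from \cite{rau} and the classification of real easy intermediates --- and this is exactly why proving (1) first is essential in their argument, while in yours it is a corollary.

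Your closing ``alternative'' for (1) --- $PU_N^\times$ classical by Proposition 3.4, hence equal to $PU_N$ by maximality --- is in fact the paper's own proof of (1), modulo the citation for $(PU_N^+)_{class}=PU_N$. If you made that step rigorous first, you could then run the paper's argument for (2) and avoid the unfinished Tannakian computation altogether.
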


\begin{proof}
With terminology and notations from \cite{ba3}, the idea is as follows:

(1) Here $PO_N^*=PU_N$ is a well-known result, from \cite{bve}. It is clear as well that we have $PU_N\subset PU_N^{**}\subset PU_N^\times$. Now by using Proposition 3.4 (1), we conclude that $PU_N^\times$ is classical, and so we must have $PU_N^\times\subset(PU_N^+)_{class}$. But this latter space $(PU_N^+)_{class}$ is known to be equal to $PU_N$, and this finishes the proof. See \cite{ba3}, \cite{bdd}, \cite{bd+}.

(2) If we denote by $v_{ij}$ the standard coordinates on $G=O_N^*,U_N,U_N^{**},U_N^\times$, and by $z$ the generator of a copy of $C(\mathbb T)$, free from $C(G)$, then with $a,b,c\in\{v_{ij}\}$ we have:
$$(za)(zb)^*(zc)=zab^*c=zcb^*a=(zc)(zb)^*(za)$$

Thus we have $\widetilde{G}\subset U_N^\times$. Conversely now, it follows from the general theory of the free complexifications of easy quantum groups \cite{rau} that both $K=\widetilde{G},U_N^\times$ should appear as free complexifications of certain intermediate easy quantum groups $O_N\subset H\subset O_N^+$. On the other hand, since we have $PH=P\widetilde{H}=PK=PU_N$, the only choice here is $H=O_N^*$. Thus we have $\widetilde{G}=U_N^\times=\widetilde{O_N^*}$, and this finishes the proof. See \cite{ba3}.
\end{proof}

Let us prove now the quantum isometry group result. This is known since \cite{ba3}, but we have now a much simpler proof for this fact. We use the following notion:

\begin{definition}
Given a closed subgroup $G\subset U_N^+$, and a closed subset $X\subset S^{N-1}_{\mathbb C,+}$, we say that $G$ acts projectively on $X$, and write $PG\curvearrowright PX$, when the formula $$\Phi(x_ix_j^*)=\sum_{ab}u_{ia}u_{jb}^*\otimes x_ax_b^*$$
defines a morphism of $C^*$-algebras $\Phi:C(PX)\to C(PG)\otimes C(PX)$.
\end{definition}

Observe the similarity with Definition 1.3 above, dealing with the affine case. As in the affine case, such a morphism is automatically coassociative and counital. Observe also that any affine action $G\curvearrowright X$ produces a projective action $PG\curvearrowright PX$. See \cite{bme}.

We can now formulate our quantum isometry group results, as follows:

\begin{proposition}
We have the following results:
\begin{enumerate}
\item $PG\curvearrowright P^{N-1}_\mathbb C$ implies $G\subset U_N^\times$.

\item $G\curvearrowright S^{N-1}_{\mathbb C,\times}$ implies $G\subset U_N^\times$.
\end{enumerate}
\end{proposition}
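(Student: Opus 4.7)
The plan is to prove part (1) first and then deduce part (2) from it. For the reduction, an affine action $G\curvearrowright S^{N-1}_{\mathbb C,\times}$ induces a projective action $PG\curvearrowright PS^{N-1}_{\mathbb C,\times}$; by Proposition 3.5(1) the sphere $S^{N-1}_{\mathbb C,\times}$ is left half-classical, so $PS^{N-1}_{\mathbb C,\times}$ is commutative, and a direct check using the relations $ab^*c=cb^*a$ and $\sum x_ix_i^*=1$ verifies on the generators $p_{ij}=x_ix_j^*$ the defining relations of the classical $P^{N-1}_\mathbb C$ (namely $p_{ij}p_{kl}=p_{il}p_{kj}$, $p_{ij}^*=p_{ji}$, $\sum_ip_{ii}=1$), identifying $PS^{N-1}_{\mathbb C,\times}$ with $P^{N-1}_\mathbb C$. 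Part (1) then applies.

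For part (1), the classicality of $P^{N-1}_\mathbb C$ forces both $\Phi(p_{ij})\Phi(p_{kl})=\Phi(p_{kl})\Phi(p_{ij})$ and $\Phi(p_{ij}p_{kl})=\Phi(p_{il}p_{kj})$ in $C(PG)\otimes C(P^{N-1}_\mathbb C)$. Expanding via $\Phi(p_{ij})=\sum u_{ia}u_{jb}^*\otimes p_{ab}$ and evaluating at the characters $p_{ab}\mapsto\alpha_a\bar\alpha_b$ for $\alpha\in S^{N-1}_\mathbb C$ yields two scalar identities in $C(G)$, valid for every $\alpha\in\mathbb C^N$ and with $X_i=\sum_au_{ia}\alpha_a$:
\[X_iX_j^*X_kX_l^*=X_kX_l^*X_iX_j^*\quad\text{and}\quad X_iX_j^*X_kX_l^*=X_iX_l^*X_kX_j^*.\]
Chaining these gives $X_iX_l^*X_kX_j^*=X_kX_l^*X_iX_j^*$, and then multiplying on the right by $X_j$, summing over $j$, and using the orthogonality $\sum_jX_j^*X_j=\|\alpha\|^2$ coming from $u^*u=1$ --- after dividing by the positive scalar $\|\alpha\|^2$ --- produces the cleaner three-factor identity $X_iX_l^*X_k=X_kX_l^*X_i$ in $C(G)$, valid for all $\alpha$.

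The main obstacle will be the last step: upgrading this scalar identity to the coordinate-level relation $u_{ia}u_{lb}^*u_{kc}=u_{kc}u_{lb}^*u_{ia}$ that defines $U_N^\times$. Straightforward polarization in $(\alpha,\bar\alpha)$ as independent holomorphic variables only yields the symmetrized version, since the two copies of $\alpha$ sitting in $X_i$ and $X_k$ cannot be decoupled by multilinearity alone. I will follow the standard method from \cite{bhg}: specialize $\alpha=e_a+te_c$ for a formal parameter $t\in\mathbb C$, expand the identity in powers of $t$ and $\bar t$, and exploit the row/column orthogonality relations $\sum_lu_{ld}^*u_{le}=\sum_lu_{ld}u_{le}^*=\delta_{de}$ (from $u^*u=uu^*=1$) to annihilate the parasitic cross-terms and isolate the single commutator $[u_{ia}u_{lb}^*,u_{kc}]$. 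Once this extraction is carried out for arbitrary index configurations, we obtain $G\subset U_N^\times$, and the reduction above then delivers part (2).
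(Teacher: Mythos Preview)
Your reduction of (2) to (1) matches the paper, and your route to the three-factor identity $X_iX_l^*X_k=X_kX_l^*X_i$ is sound. Extracting coefficients of the monomials $\alpha_a\bar\alpha_b\alpha_c$ yields, for $a\neq c$, only the symmetrized relation
\[
[u_{ia},u_{lb}^*,u_{kc}]+[u_{ic},u_{lb}^*,u_{ka}]=0,
\]
together with the diagonal case $[u_{ia},u_{lb}^*,u_{ka}]=0$. You acknowledge this, but your proposed cure --- specializing $\alpha=e_a+te_c$ and reading off $t,\bar t$-coefficients --- gives nothing beyond coefficient extraction from the polynomial identity, so it reproduces exactly the same symmetrized relations. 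Row/column orthogonality alone does not break this symmetry either: multiplying the displayed relation on the left by $u_{ia}^*$ and summing over $i$, or on the right by $u_{kc}^*$ and summing over $k$, leaves residual four-fold sums that do not collapse. So the last step, as written, is a genuine gap.

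The missing idea is the antipode. The paper arrives (by a slightly different bookkeeping, working directly in the tensor product and using that the $p_{ab}p_{cd}$ depend only on $|\{a,c\}|$ and $|\{b,d\}|$) at an equivalent symmetrized relation, and then applies $S$ followed by the involution and a relabeling of indices. Because $S(u_{ij})=u_{ji}^*$ is an anti-homomorphism, this converts the symmetrized relation into its \emph{anti}-symmetrized companion; in your notation one gets $[u_{ia},u_{lb}^*,u_{kc}]-[u_{ic},u_{lb}^*,u_{ka}]=0$, and adding this to the original kills the cross-term and yields $[u_{ia},u_{lb}^*,u_{kc}]=0$, i.e.\ $G\subset U_N^\times$. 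This step genuinely uses that $G$ is a quantum group (the existence of $S$), not merely the unitarity relations, so it cannot be replaced by the orthogonality identities you invoke.
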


\begin{proof}
Since $G\curvearrowright S^{N-1}_{\mathbb C,\times}$ implies that we have $PG\curvearrowright PS^{N-1}_{\mathbb C,\times}=P^{N-1}_\mathbb C$, we just have to prove the first assertion. For this purpose, we use an old method from \cite{bhg}, as in \cite{ba4}.

Consider indeed a coaction map, written $\Phi(p_{ij})=\sum_{ab}u_{ia}u_{jb}^*\otimes p_{ab}$, with $p_{ab}=z_a\bar{z}_b$. The idea will be that of using the formula $p_{ab}p_{cd}=p_{ad}p_{cb}$. We have:
\begin{eqnarray*}
\Phi(p_{ij}p_{kl})&=&\sum_{abcd}u_{ia}u_{jb}^*u_{kc}u_{ld}^*\otimes p_{ab}p_{cd}\\
\Phi(p_{il}p_{kj})&=&\sum_{abcd}u_{ia}u_{ld}^*u_{kc}u_{jb}^*\otimes p_{ad}p_{cb}
\end{eqnarray*}

The left terms being equal, and the last terms on the right being equal too, we deduce that, with $[a,b,c]=abc-cba$, we must have the following equality:
$$\sum_{abcd}u_{ia}[u_{jb}^*,u_{kc},u_{ld}^*]\otimes p_{ab}p_{cd}=0$$

In order to exploit this equality, we use basic tensor product theory, the trick when having a formula of type $\sum_ia_i\otimes b_i=0$ being to compact the elements on the right, by using linear dependence, then to conclude that the elements on the left must vanish.

In our case, since the quantities $p_{ab}p_{cd}=z_a\bar{z}_bz_c\bar{z}_d$ on the right depend only on the numbers $|\{a,c\}|,|\{b,d\}|\in\{1,2\}$, and this dependence produces the only possible linear relations between the variables $p_{ab}p_{cd}$, we are led to $2\times2=4$ equations, as follows:

(1) $u_{ia}[u_{jb}^*,u_{ka},u_{lb}^*]=0$, $\forall a,b$.

(2) $u_{ia}[u_{jb}^*,u_{ka},u_{ld}^*]+u_{ia}[u_{jd}^*,u_{ka},u_{lb}^*]=0$, $\forall a$, $\forall b\neq d$.

(3) $u_{ia}[u_{jb}^*,u_{kc},u_{lb}^*]+u_{ic}[u_{jb}^*,u_{ka},u_{lb}^*]=0$, $\forall a\neq c$, $\forall b$.

(4) $u_{ia}([u_{jb}^*,u_{kc},u_{ld}^*]+[u_{jd}^*,u_{kc},u_{lb}^*])+u_{ic}([u_{jb}^*,u_{ka},u_{ld}^*]+[u_{jd}^*,u_{ka},u_{lb}^*])=0,\forall a\neq c,\forall b\neq d$.

Let us process now all these formulae. Regarding (3,4), we won't need them, in what follows. From (1,2) we conclude that (2) holds with no restriction on the indices. By multiplying now this formula to the left by $u_{ia}^*$, and then summing over $i$, we obtain:
$$[u_{jb}^*,u_{ka},u_{ld}^*]+[u_{jd}^*,u_{ka},u_{lb}^*]=0$$

By applying now the antipode, then the involution, and finally by suitably relabelling all the indices, we successively obtain from this formula:
\begin{eqnarray*}
&&[u_{dl},u_{ak}^*,u_{bj}]+[u_{bl},u_{ak}^*,u_{dj}]=0\\
&\implies&[u_{dl}^*,u_{ak},u_{bj}^*]+[u_{bl}^*,u_{ak},u_{dj}^*]=0\\
&\implies&[u_{ld}^*,u_{ka},u_{jb}^*]+[u_{jd}^*,u_{ka},u_{lb}^*]=0
\end{eqnarray*}

Now by comparing with the original relation, above, we conclude that we have:
$$[u_{jb}^*,u_{ka},u_{ld}^*]=[u_{jd}^*,u_{ka},u_{lb}^*]=0$$

Thus we have reached to the formulae defining $U_N^\times$, and we are done.
\end{proof}

We have now all the needed ingredients for proving:

\begin{theorem}
The sphere $S^{N-1}_{\mathbb R,\times}$, the torus $T_N^\times$ and the quantum group $U_N^\times$ form a noncommutative geometry. This geometry is easy, and not amenable.
\end{theorem}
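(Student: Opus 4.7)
My plan is to verify the three claims—geometry, easiness, non-amenability—in that order, using Proposition 1.6 for the first part and the material from Section 3 throughout. For the geometry axioms, the inclusions $S^{N-1}_\mathbb R \subset S^{N-1}_{\mathbb C,\times} \subset S^{N-1}_{\mathbb C,+}$, $\widehat{\mathbb Z_2^N} \subset T_N^\times \subset \widehat{F_N}$, and $O_N \subset U_N^\times \subset U_N^+$ are automatic, because the relations $ab^*c = cb^*a$ are trivially satisfied in any commutative algebra. Condition (1) of Proposition 1.6 uses $O_N \subset U_N^\times$ together with the universal affine action of Proposition 3.9. For condition (2), the map $\delta(x_i) = \sqrt N\, x_i \otimes x_i$ preserves $\sum_i x_ix_i^* = \sum_i x_i^*x_i = 1$ by direct computation using $x_ix_i^* = 1/N$ in $C(T_N^\times)$, and it preserves $ab^*c = cb^*a$ since both tensor factors satisfy this relation; indeed, rewriting the torus relation $g_ig_j^{-1}g_k = g_kg_j^{-1}g_i$ in terms of $x_i = g_i/\sqrt N$ yields $x_ix_j^*x_k = x_kx_j^*x_i$. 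Finally, the same rewriting identifies $T_N^\times$ both as the standard torus of $S^{N-1}_{\mathbb C,\times}$ (setting $g_i = \sqrt N x_i$ after imposing $x_ix_i^* = x_i^*x_i = 1/N$) and as the diagonal torus of $U_N^\times$ (setting $g_i = u_{ii}$ after imposing $u_{ij} = 0$ for $i \neq j$); the quantum isometry group identification is Proposition 3.9.

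Easiness is immediate from the construction: the relations $ab^*c = cb^*a$ correspond, via the standard dictionary of \cite{twe}, to imposing $T_\pi \in \mathrm{End}(u^{\otimes 3})$ for the half-classical crossing $\pi^\times$ with coloring $\circ\bullet\circ/\circ\bullet\circ$. Together with the noncrossing pairings $\mathcal{NC}_2$ encoding unitarity, this presents $U_N^\times$ as the easy quantum group attached to the category $\langle \mathcal{NC}_2, \pi^\times\rangle \subset P_2$.

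For non-amenability, I will show that $\Gamma_N^\times = \widehat{T_N^\times}$ is non-amenable, which forces $\widehat{U_N^\times}$ to be non-amenable since $T_N^\times \subset U_N^\times$ exhibits $\Gamma_N^\times$ as a discrete quantum quotient of $\widehat{U_N^\times}$. The presentation reads
\[
\Gamma_N^\times = \left\langle g_1, \ldots, g_N \;\big|\; g_ig_j^{-1}g_k = g_kg_j^{-1}g_i,\ \forall\, i,j,k\right\rangle.
\]
For $N = 2$, a case check over the eight triples $(i,j,k) \in \{1,2\}^3$ shows that each relation reduces to a tautology (either $g_k = g_k$ or $g_ig_j^{-1}g_i = g_ig_j^{-1}g_i$), so $\Gamma_2^\times = F_2$ is free of rank two, hence non-amenable. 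For $N \geq 3$, the assignment $g_i \mapsto g_i$ for $i \in \{1,2\}$ and $g_i \mapsto g_1$ for $i \geq 3$ sends every defining relation of $\Gamma_N^\times$ to a triple relation in $\{g_1,g_2\}^3$, hence to a tautology in $F_2$, and therefore defines a surjection $\Gamma_N^\times \twoheadrightarrow F_2$. Thus $\Gamma_N^\times$ is non-amenable for every $N \geq 2$.

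The main obstacle is the bookkeeping in the first step, around the simultaneous identification of $T_N^\times$ as standard torus and as diagonal torus; this amounts, however, to the same rescaling computation applied to two different sets of generators. The remaining steps are essentially verifications that certain maps are well-defined, plus the elementary observation that the torus relations are already vacuous for $N=2$.
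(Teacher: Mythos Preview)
Your proof is correct. The verification of the geometry axioms and of easiness follows the same path as the paper: the only non-trivial point is the quantum isometry group computation, which is Proposition 3.9, and the rest is bookkeeping. (One small remark: Proposition 3.9 gives only the upper bound $G\subset U_N^\times$; you implicitly assume the existence of the action $U_N^\times\curvearrowright S^{N-1}_{\mathbb C,\times}$ when you invoke the ``universal affine action'', but this is a one-line check, writing $X_i=\sum_a u_{ia}\otimes x_a$ and using the relation on both tensor factors.)

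Your non-amenability argument, however, differs from the paper's and is in fact more elementary. The paper constructs a morphism $\Gamma_N^\times\to\mathbb Z*\mathbb Z^N$ via $g_i\mapsto zh_i$ (the free complexification map) and then exhibits two free elements, namely $h_1^{-1}h_2$ and $zh_1h_2^{-1}z^{-1}$, in the image, producing a copy of $F_2$. You instead observe directly that for $N=2$ every defining relation $g_ig_j^{-1}g_k=g_kg_j^{-1}g_i$ is a tautology, so $\Gamma_2^\times=F_2$, and then retract $\Gamma_N^\times$ onto $\Gamma_2^\times$ by collapsing all extra generators to $g_1$. Your route avoids the free product $\mathbb Z*\mathbb Z^N$ entirely and is shorter; the paper's route has the virtue of making visible the free complexification structure $U_N^\times=\widetilde{U_N}$ that is used elsewhere in the paper.
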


\begin{proof}
The verification of all the axioms is standard, with the only non-trivial fact, namely the universality of the action $U_N^\times\curvearrowright S^{N-1}_{\mathbb C,\times}$, coming from Proposition 3.9 above.

Regarding the non-amenability claim for the geometry that we have, here we must prove that the quantum group $U_N^\times$ is not coamenable. But this can  be checked by using a free complexification trick. We know that the discrete group $\Gamma_N^\times=\widehat{T_N^\times}$ is given by:
$$\Gamma_N^\times=\left<g_1,\ldots,g_N\Big|ab^{-1}c=cb^{-1}a,\forall a,b,c\in\{g_i\}\right>$$

Now observe that, if we denote by $h_1,\ldots,h_N$ the standard generators of $\mathbb Z^N$, and by $z$ the generator of a copy of $\mathbb Z$, which is free from $\mathbb Z^N$, then with $a,b,c\in\{h_i\}$ we have:
$$(za)(zb)^{-1}(zc)=zab^{-1}c=zcb^{-1}a=(zc)(zb)^{-1}(za)$$

We therefore have a group morphism $\Gamma_N^\times\to\mathbb Z*\mathbb Z^N$, given by $g_i\to zh_i$. Now observe that the image of this morphism contains the following two elements:
$$(zh_1)^{-1}(zh_2)=h_1^{-1}h_2\quad,\quad 
(zh_1)(zh_2)^{-1}=zh_1h_2^{-1}z^*$$

These elements being free, we obtain a copy of $F_2$ inside the image. Thus the image, and then $\Gamma_N^\times$, and then $\widehat{U_N^\times}$ itself, follow to be non-amenable as well.
\end{proof}

The $U_N^\times$ geometry can be further investigated by using various algebraic tricks, and notably the free complexification formula $U_N^\times=\widetilde{U_N}$. We refer here to \cite{ba3}.

\section{Half-classical geometry}

In this section we introduce and study the $U_N^*$ geometry, which is the ``correct'' complex half-classical one, in the sense that it is the biggest half-classical geometry. 

In view of Definition 3.3 above, we can indeed formulate:

\begin{definition}
We have a sphere, a torus, and a quantum group, as follows:
\begin{enumerate}
\item $S^{N-1}_{\mathbb C,*}\subset S^{N-1}_{\mathbb C,+}$, obtained via ``$ab^*,a^*b$ all commute'', with $a,b,c\in\{x_i\}$.

\item $T_N^*\subset\widehat{F_N}$, obtained via ``$ab^{-1},a^{-1}b$ all commute'', with $a,b,c\in\{g_i\}$.

\item $U_N^*\subset U_N^+$, obtained via ``$ab^*,a^*b$ all commute'', with $a,b,c\in\{u_{ij}\}$.
\end{enumerate}
In other words, these are the biggest half-classical sphere, torus, and quantum group.
\end{definition}

As a first remark, the real version of $S^{N-1}_{\mathbb C,*}$, obtained by imposing the conditions $x_i=x_i^*$ to the standard coordinates, is the half-classical real sphere $S^{N-1}_{\mathbb R,*}$. Observe also that we have inclusions as follows, coming from the various results in Proposition 3.4:
$$S^{N-1}_{\mathbb C,**}\subset S^{N-1}_{\mathbb C,*}\subset S^{N-1}_{\mathbb C,\times}$$

Similar inclusions are valid for the tori, and for the quantum groups. Finally, observe that $U_N^*$ is by definition easy, coming from the following two diagrams:
$$\xymatrix@R=15mm@C=5mm{\circ\ar@{-}[drr]&\bullet\ar@{-}[drr]&\bullet\ar@{-}[dll]&\circ\ar@{-}[dll]\\\bullet&\circ&\circ&\bullet}\qquad \quad\qquad 
\xymatrix@R=15mm@C=5mm{\circ\ar@{-}[drr]&\bullet\ar@{-}[drr]&\circ\ar@{-}[dll]&\bullet\ar@{-}[dll]\\\circ&\bullet&\circ&\bullet}$$

In order to better understand the construction $X\to\mathcal PX$, and the definition of $S^{N-1}_{\mathbb C,*}$ itself, let us perform now some explicit computations. We denote by $P^{N-1}_\mathbb C$ the usual complex projective space. We have the following result:

\begin{proposition}
The projective versions of $S^{N-1}_{\mathbb C,**}\subset S^{N-1}_{\mathbb C,*}\subset S^{N-1}_{\mathbb C,\times}$ are given by
$$\xymatrix@R=20mm@C=20mm{
S^{N-1}_{\mathbb C,**}\ar[r]\ar[d]&S^{N-1}_{\mathbb C,*}\ar[r]\ar[d]&S^{N-1}_{\mathbb C,\times}\ar[d]\\
P^{N-1}_\mathbb C\times P^{N-1}_\mathbb C\ar[r]&P^{N-1}_\mathbb C\times P^{N-1}_\mathbb C\ar[r]&P^{N-1}_\mathbb C\circ P^{N-1}_\mathbb C
}$$
where the product on the bottom right is constructed by conjugating by a free unitary.
\end{proposition}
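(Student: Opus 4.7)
The idea is to compute, for each of the three spheres, the subalgebras of $C(\mathcal PS)$ generated by the left coordinates $p_{ij}=x_ix_j^*$ and by the right coordinates $q_{ij}=x_j^*x_i$, and then analyze how they sit together. First I observe that in every case Proposition 3.4 guarantees that $\{p_{ij}\}$ and $\{q_{ij}\}$ are each pairwise commuting. Verifying the remaining defining relations of $C(P^{N-1}_\mathbb C)$ is direct: $p_{ij}^*=p_{ji}$ and $\sum_i p_{ii}=1$ are immediate, and the rank-one identity $p_{ij}p_{kl}=p_{il}p_{kj}$ reduces to $x_j^*x_kx_l^*=x_l^*x_kx_j^*$, which is valid in $S^{N-1}_{\mathbb C,\times}$ by taking the adjoint of the defining relation $ab^*c=cb^*a$. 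Hence $PS=P'S=P^{N-1}_\mathbb C$ for all three spheres.

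For $S=S^{N-1}_{\mathbb C,**}$ and $S=S^{N-1}_{\mathbb C,*}$, the defining relations explicitly include $p_{ij}q_{kl}=q_{kl}p_{ij}$, so $C(\mathcal PS)$ is commutative and the obvious assignment yields a surjection $C(P^{N-1}_\mathbb C\times P^{N-1}_\mathbb C)\twoheadrightarrow C(\mathcal PS)$. To establish the reverse inclusion I plan to use the block matrix model
\[
x_i=\begin{pmatrix}0&\xi_i\\ \eta_i&0\end{pmatrix}\in M_2\!\left(C(S^{N-1}_\mathbb C)\otimes C(S^{N-1}_\mathbb C)\right),
\]
where $\xi,\eta$ are the coordinates on the two classical sphere factors. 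A direct calculation gives $\sum_i x_ix_i^*=\sum_i x_i^*x_i=1$ and shows that each of $x_ix_j,x_ix_j^*,x_i^*x_j,x_i^*x_j^*$ is diagonal, so these elements pairwise commute, placing the model inside $S^{N-1}_{\mathbb C,**}\subset S^{N-1}_{\mathbb C,*}$. Taking the $(1,1)$-entry of the image yields an algebra map $C(\mathcal PS)\to C(P^{N-1}_\mathbb C\times P^{N-1}_\mathbb C)$ sending $p_{ij}\mapsto\xi_i\bar\xi_j$ and $q_{ij}\mapsto\bar\eta_j\eta_i$, which is a two-sided inverse to the surjection on generators, hence an isomorphism.

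For $S=S^{N-1}_{\mathbb C,\times}$, the $p$'s and $q$'s no longer commute, so the target must be the ``twisted product'' $P^{N-1}_\mathbb C\circ P^{N-1}_\mathbb C$. Here I would invoke the sphere analogue of the free complexification identity $U_N^\times=\widetilde{O_N^*}$ from Proposition 3.8: model $x_i=zy_i$, with $(y_i)$ the self-adjoint coordinates of $S^{N-1}_{\mathbb R,*}$ and $z$ a Haar unitary free from the $y$'s. A direct check confirms that this model satisfies $ab^*c=cb^*a$ and lies inside the free product. Then $q_{ij}=y_jy_i$ generates a copy of $C(PS^{N-1}_{\mathbb R,*})=C(P^{N-1}_\mathbb C)$, while $p_{ij}=zy_iy_jz^*$ generates the $z$-conjugate copy; by definition these together form precisely $P^{N-1}_\mathbb C\circ P^{N-1}_\mathbb C$.

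\textbf{Main obstacle.} The algebraic verifications in the first two steps should go through cleanly. The delicate point is the $\times$ case: one must confirm that the free complexification model is \emph{universal} for the defining relations of $S^{N-1}_{\mathbb C,\times}$, so that $\mathcal PS^{N-1}_{\mathbb C,\times}$ is actually equal to, and not merely contained in, $P^{N-1}_\mathbb C\circ P^{N-1}_\mathbb C$. This depends on transferring the quantum group free complexification from Proposition 3.8 to the level of spheres, a step that is natural but needs to be spelled out before it can be used here.
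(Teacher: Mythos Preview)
Your argument is correct and follows essentially the same route as the paper: the same $2\times 2$ antidiagonal model for the $**$ and $*$ cases, and a free complexification model for the $\times$ case. The only cosmetic difference is that for $S^{N-1}_{\mathbb C,\times}$ the paper complexifies $S^{N-1}_\mathbb C$ rather than $S^{N-1}_{\mathbb R,*}$, obtaining directly $x_ix_j^*=p_{ij}$ and $x_j^*x_i=zp_{ij}z^*$ with $p_{ij}=z_i\bar z_j$; since Proposition~3.8 shows all four base objects have the same free complexification, the two choices are equivalent, and the universality issue you flag is exactly the point the paper imports from \cite{ba3}.
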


\begin{proof}
We use the following presentation result, which comes from the Gelfand theorem, and from the fact that $P^{N-1}_\mathbb C$ is the space of rank 1 projections in $M_N(\mathbb C)$:
$$C(P^{N-1}_\mathbb C)=C^*_{comm}\left\{(p_{ij})_{i,j=1,\ldots,N}\Big|p=p^2=p^*,Tr(p)=1\right\}$$

Let us first discuss the computation of the spaces $\mathcal PS^{N-1}_{\mathbb C,**}\subset\mathcal PS^{N-1}_{\mathbb C,*}$. We know that these spaces are both classical. We also know that the left and right components, in the sense of Definition 3.3 above, of these spaces are all equal to $P^{N-1}_\mathbb C$, for instance because their standard generators satisfy the above defining relations for $C(P^{N-1}_\mathbb C)$.

In order to finish, it remains to prove that the subspaces $PS^{N-1}_{\mathbb C,**},P'S^{N-1}_{\mathbb C,**}\subset\mathcal PS^{N-1}_{\mathbb C,**}$, which are both isomorphic to $P^{N-1}_\mathbb C$, are in generic position. For this purpose, we can use a suitable matrix model, coming from \cite{bbi}. Let indeed $u_i,v_i$ be the standard coordinates of two independent copies of $S^{N-1}_\mathbb C$, and consider the following matrices:
$$X_i=\begin{pmatrix}0&u_i\\ v_i&0\end{pmatrix}\quad,\quad X_i^*=\begin{pmatrix}0&\bar{v}_i\\ \bar{u}_i&0\end{pmatrix}$$

We have then $\sum_iX_iX_i^*=\sum_iX_i^*X_i=1$ and the relations $abc=cba$ hold as well, for any $a,b,c\in\{X_i,X_i^*\}$. Thus we have a matrix model, as follows:
$$C(S^{N-1}_{\mathbb C,**})\to M_2(C(S^{N-1}_\mathbb C\times S^{N-1}_\mathbb C))\quad,\quad x_i\to X_i$$

The point now is that, in this model, we have the following formulae:
$$X_iX_j^*=\begin{pmatrix}u_i\bar{u}_j&0\\0&v_i\bar{v}_j\end{pmatrix}\quad,\quad
X_j^*X_i=\begin{pmatrix}v_i\bar{v}_j&0\\0&u_i\bar{u}_j\end{pmatrix}$$

Now since these matrices are conjugated by an order 2 automorphism, the algebra that they generate is isomorphic to $C(P^{N-1}_\mathbb C\times P^{N-1}_\mathbb C)$, and this finishes the proof. 

Finally, regarding the computation for $S^{N-1}_{\mathbb C,\times}$, let us denote by $p_{ij}=z_i\bar{z}_j$ the standard coordinates on $P^{N-1}_\mathbb C=PS^{N-1}_\mathbb C$. In the usual free complexification model for $S^{N-1}_{\mathbb C,\times}$, namely $\widetilde{S}^{N-1}_\mathbb C$, we have then $x_ix_j^*=p_{ij},x_j^*x_i=zp_{ij}z^*$, and this gives the result.
\end{proof}

In order to verify now the axioms, we follow the proof for $U_N^\times$. First, we have:

\begin{definition}
Given a closed subgroup $G\subset U_N^+$, and a closed subset $X\subset S^{N-1}_{\mathbb C,+}$, we say that $G$ acts fully projectively on $X$, and write $\mathcal PG\curvearrowright\mathcal PX$, when the formulae 
$$\Phi(x_ix_j^*)=\sum_{ab}u_{ia}u_{jb}^*\otimes x_ax_b^*$$
$$\Phi(x_i^*x_j)=\sum_{ab}u_{ia}^*u_{jb}\otimes x_a^*x_b$$
define a morphism of $C^*$-algebras $\Phi:C(\mathcal PX)\to C(\mathcal PG)\otimes C(\mathcal PX)$.
\end{definition}

As in the affine case, such a morphism is automatically coassociative and counital. Observe also that any affine action $G\curvearrowright X$ produces a projective action $\mathcal PG\curvearrowright\mathcal PX$.

We can now formulate our quantum isometry group results, as follows:

\begin{proposition}
We have the following results:
\begin{enumerate}
\item We have an affine action $U_N^*\curvearrowright S^{N-1}_{\mathbb C,*}$.

\item $\mathcal PG\curvearrowright P^{N-1}_\mathbb C\times P^{N-1}_\mathbb C$ implies $G\subset U_N^*$.

\item $G\curvearrowright S^{N-1}_{\mathbb C,*}$ implies $G\subset U_N^*$.

\item $U_N^*$ is the quantum isometry group of $S^{N-1}_{\mathbb C,*}$.
\end{enumerate}
\end{proposition}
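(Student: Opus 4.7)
The plan is to mirror the template of Proposition 3.9. Assertion (4) follows directly from (1) and (3), while (3) is an immediate corollary of (2): any affine action $G\curvearrowright S^{N-1}_{\mathbb C,*}$ restricts on the subalgebras generated by $\{x_ix_j^*\}$ and $\{x_i^*x_j\}$ to a fully projective action $\mathcal PG\curvearrowright\mathcal PS^{N-1}_{\mathbb C,*}$, which by Proposition 4.2 is $\mathcal PG\curvearrowright P^{N-1}_\mathbb C\times P^{N-1}_\mathbb C$. Thus the real content lies in (1) and in (2).

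For (1), I would set $X_i=\sum_a u_{ia}\otimes x_a$ and verify that the images satisfy the defining commutations of $S^{N-1}_{\mathbb C,*}$. For instance,
\[
X_iX_j^*\cdot X_k^*X_l=\sum_{abcd}u_{ia}u_{jb}^*u_{kc}^*u_{ld}\otimes x_ax_b^*x_c^*x_d,
\]
and $X_k^*X_l\cdot X_iX_j^*$ is the same sum with the two tensor factors swapped in multiplicative order; the two coincide because, in $C(U_N^*)$, $u_{ia}u_{jb}^*$ commutes with $u_{kc}^*u_{ld}$ (Definition 4.1) and, in $C(S^{N-1}_{\mathbb C,*})$, $x_ax_b^*$ commutes with $x_c^*x_d$. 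The three other commutation types between the pairs $\{X_iX_j^*,X_i^*X_j\}$ are treated identically, and the universal property of $C(S^{N-1}_{\mathbb C,*})$ then delivers the coaction.

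The main obstacle is (2). I apply $\Phi$ (Definition 4.3) to each commutation relation available in the target $C(P^{N-1}_\mathbb C\times P^{N-1}_\mathbb C)$: commutativity within each projective factor, $p_{ij}p_{kl}=p_{kl}p_{ij}$ and $q_{ij}q_{kl}=q_{kl}q_{ij}$, and commutativity between factors, $p_{ij}q_{kl}=q_{kl}p_{ij}$. This last relation is genuinely new relative to Proposition 3.9 and yields
\[
\sum_{abcd}\bigl[\,u_{ia}u_{jb}^*,\,u_{kc}^*u_{ld}\,\bigr]\otimes p_{ab}q_{cd}=0,
\]
after which I would run the Bhowmick--Goswami tensor-separation technique \cite{bhg} as in Proposition 3.9: the only linear dependencies among the monomials $p_{ab}q_{cd}=u_a\bar u_bv_c\bar v_d$ come from the trace conditions $\sum_a p_{aa}=1$ and $\sum_c q_{cc}=1$, so a case split by $|\{a,b\}|,|\{c,d\}|\in\{1,2\}$, followed by multiplication with $u_{ia}^*$ and summation in $i$, isolates the commutator $[u_{ia}u_{jb}^*,u_{kc}^*u_{ld}]=0$ for all indices. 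The two within-factor relations yield analogously $[u_{ia}u_{jb}^*,u_{kc}u_{ld}^*]=0$ and $[u_{ia}^*u_{jb},u_{kc}^*u_{ld}]=0$. Together these are exactly the three defining commutations of $U_N^*$. I expect the delicate bookkeeping to be in the degenerate subcases $a=b$ or $c=d$ where the trace relations are active; as at the end of the proof of Proposition 3.9, these should be cleared by applying the antipode and involution to reduce them to the generic subcases.
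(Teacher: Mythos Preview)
Your reductions for (1), (3), (4) match the paper's exactly. For (2) your plan is sound, but the paper's argument is shorter, and the difficulty is distributed differently from what you anticipate.

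First, the paper does not extract the two within-factor commutations by a fresh tensor-separation argument. It simply notes that $\mathcal PG\curvearrowright P^{N-1}_\mathbb C\times P^{N-1}_\mathbb C$ restricts to $PG\curvearrowright P^{N-1}_\mathbb C$ on the left factor, and then \emph{invokes} Proposition~3.9(1) to conclude $G\subset U_N^\times$; by Proposition~3.4(1) this already gives $[ab^*,cd^*]=0$ and $[a^*b,c^*d]=0$. Your ``analogously'' hides that the within-factor monomials $p_{ab}p_{cd}=z_a\bar z_bz_c\bar z_d$ live in a single copy of $C(P^{N-1}_\mathbb C)$ and satisfy $p_{ab}p_{cd}=p_{ad}p_{cb}$, so the correct case split there is by $|\{a,c\}|,|\{b,d\}|$ and the work is precisely that of Proposition~3.9 --- reusing it is the point.

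Second, for the one genuinely new cross-commutation the paper avoids all case analysis. Via Proposition~4.2 one has $x_ax_b^*x_c^*x_d=p_{ab}\otimes q_{dc}$ with the two factors coming from \emph{independent} copies of $P^{N-1}_\mathbb C$. The $N^2$ functions $p_{ab}=z_a\bar z_b$ are in fact linearly independent in $C(P^{N-1}_\mathbb C)$ (the trace relation $\sum_ap_{aa}=1$ involves the constant function, it is not a relation among the $p_{ab}$ themselves), so the tensors $p_{ab}\otimes q_{dc}$ are linearly independent and
\[
\sum_{abcd}[u_{ia}u_{jb}^*,u_{kc}^*u_{ld}]\otimes p_{ab}\otimes q_{dc}=0
\]
kills every commutator at once. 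There is no degenerate subcase, no multiplication by $u_{ia}^*$ and summation, and no antipode trick needed here; your expected ``delicate bookkeeping'' evaporates because the two projective factors are in generic position. What your route buys is self-containment (not relying on Proposition~3.9), at the cost of redoing that computation inside the present proof.
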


\begin{proof}
Our first claim is that it is enough to prove (2). Indeed, in order to prove (1), observe that with $X_i=\sum_au_{ia}\otimes x_a$, we have the following formulae:
$$X_iX_j^*=\sum_{ab}u_{ia}u_{jb}^*\otimes x_ax_b^*\quad,\quad X_j^*X_i=\sum_{ab}u_{jb}^*u_{ia}\otimes x_b^*x_a$$

Now since the various variables on the right pairwise commute, the variables on the left commute as well, and so we can define the action map, by $x_i\to X_i$.

The other remark is that since $G\curvearrowright S^{N-1}_{\mathbb C,*}$ implies $\mathcal PG\curvearrowright PS^{N-1}_{\mathbb C,\times}=P^{N-1}_\mathbb C\times P^{N-1}_\mathbb C$, we have $(2)\implies(3)$. Finally, the implication $(1+3)\implies(4)$ is trivial.

In order to prove now (2), observe that $\mathcal PG\curvearrowright P^{N-1}_\mathbb C\times P^{N-1}_\mathbb C$ implies $PG\curvearrowright P^{N-1}_\mathbb C$. Thus, we can use Proposition 3.9 (1), and we obtain $G\subset U_N^\times$. 

We are therefore left with proving that $\mathcal PG\curvearrowright P^{N-1}_\mathbb C\times P^{N-1}_\mathbb C$ implies that the following diagram belongs to the Tannakian category of $G$:
$$\xymatrix@R=15mm@C=5mm{\circ\ar@{-}[drr]&\bullet\ar@{-}[drr]&\bullet\ar@{-}[dll]&\circ\ar@{-}[dll]\\\bullet&\circ&\circ&\bullet}$$

For this purpose, consider indeed a coaction map, written as in Definition 4.3. By multiplying the two relations there, we obtain:
$$\Phi(x_ix_j^*x_k^*x_l)=\sum_{abcd}u_{ia}u_{jb}^*u_{kc}^*u_{ld}\otimes x_ax_b^*x_c^*x_d$$
$$\Phi(x_k^*x_lx_ix_j^*)=\sum_{abcd}u_{kc}^*u_{ld}u_{ia}u_{jb}^*\otimes x_c^*x_dx_ax_b^*$$

Assuming now that $x_1,\ldots,x_N$ are the standard coordinates on $S^{N-1}_{\mathbb C,*}$, the products of $x$ variables at left are equal, and so are the products at right. Thus, we have:
$$\sum_{abcd}[u_{ia}u_{jb}^*,u_{kc}^*u_{ld}]\otimes x_ax_b^*x_c^*x_d=0$$

Now recall that, in view of Proposition 4.2 above, we can write $x_ax_b^*x_c^*x_d=p_{ab}\otimes q_{dc}$, where $p_{ab},q_{cd}$ are the standard coordinates on $P^{N-1}_\mathbb C$. Thus, our formula becomes:
$$\sum_{abcd}[u_{ia}u_{jb}^*,u_{kc}^*u_{ld}]\otimes p_{ab}\otimes q_{dc}=0$$

Now since the variables on the right are linearly independent, we obtain that all the commutators vanish, and this finishes the proof.
\end{proof}

We can now formulate our main result, as follows:

\begin{theorem}
We have a noncommutative geometry, with sphere $S^{N-1}_{\mathbb C,*}$, torus $T_N^*$ and quantum group $U_N^*$. This is the biggest geometry having a classical projective version.
\end{theorem}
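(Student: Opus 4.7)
The plan is to verify the axioms of Definition 1.5 for the triple $(S^{N-1}_{\mathbb C,*}, T_N^*, U_N^*)$ and then establish the maximality claim. The required inclusions $S^{N-1}_\mathbb R \subset S^{N-1}_{\mathbb C,*}$, $\widehat{\mathbb Z_2^N} \subset T_N^*$, and $O_N \subset U_N^*$ are immediate, since each target is defined by half-commutation relations that hold trivially in the commutative (respectively abelian-group) setting, providing the required quotient maps.

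For the axioms themselves, I would invoke Proposition 1.6, which reduces the task to two checks. First, an affine action $O_N \curvearrowright S^{N-1}_{\mathbb C,*}$, obtained from Proposition 4.4(1) by restricting the universal action $U_N^* \curvearrowright S^{N-1}_{\mathbb C,*}$ along $O_N \subset U_N^*$. Second, that the diagonal assignment $\delta(x_i) = \sqrt{N}\, x_i \otimes x_i$ defines a morphism $C(S^{N-1}_{\mathbb C,*}) \to C(T_N^*) \otimes C(S^{N-1}_{\mathbb C,*})$. This is the main computation: the relation $\sum_i x_ix_i^* = 1$ is preserved via $N x_ix_i^* = 1$ in $C(T_N^*)$, while each commutation $[x_ix_j^*, x_kx_l^*] = 0$ (and its analogues for $x_i^*x_j$ and the mixed forms) is preserved because the given pair commutes in \emph{both} tensor factors: in $C(S^{N-1}_{\mathbb C,*})$ by definition, and in $C(T_N^*)$ because the rescaled generators $g_i = \sqrt{N}\, x_i$ satisfy precisely the analogous commutations by construction. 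To apply Proposition 1.6 with $T = T_N^*$ I would also note that the standard torus of $S^{N-1}_{\mathbb C,*}$ equals $T_N^*$, since imposing $x_ix_i^* = x_i^*x_i = 1/N$ turns $g_i = \sqrt{N}\, x_i$ into unitaries whose defining relations are exactly those of $T_N^*$; the quantum isometry group identification is already Proposition 4.4(4).

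For the maximality claim, let $(S, T', G')$ be any noncommutative geometry whose projective version $\mathcal PS$ is classical. Then all products $x_ix_j^*$ and $x_k^*x_l$ pairwise commute in $C(S)$, which are exactly the relations defining $S^{N-1}_{\mathbb C,*}$ per Definition 4.1. By universality this yields a surjection $C(S^{N-1}_{\mathbb C,*}) \to C(S)$, so $S \subset S^{N-1}_{\mathbb C,*}$; the standard torus and quantum isometry group of $S$ then embed in $T_N^*$ and $U_N^*$ respectively (the torus because the standard torus is a functorial quotient, the group because any additional defining relation of $S$ imposes an additional constraint on the universal coaction), giving the full comparison of geometries. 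The main obstacle is the well-definedness of $\delta$, which requires juggling the half-commutation relations in both tensor factors simultaneously; everything else is assembly of Propositions 1.6, 4.2, and 4.4.
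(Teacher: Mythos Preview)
Your verification of the axioms is correct and follows essentially the paper's approach: the nontrivial input is Proposition~4.4(4), and your use of Proposition~1.6 to organize the remaining checks is a legitimate (and more explicit) way to carry out what the paper calls the ``standard'' verification.

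There is, however, a gap in your maximality argument for the quantum group. From $S\subset S^{N-1}_{\mathbb C,*}$ you cannot conclude $G'=G^+(S)\subset U_N^*$ by the reasoning you give: additional defining relations on $S$ do \emph{not} in general shrink the quantum isometry group. A strictly smaller sphere can have a strictly larger quantum isometry group, so ``more constraints on the coaction'' does not translate into ``$G'$ is a subgroup of $U_N^*$''. The clean argument is different and much shorter: ``classical projective version of the geometry'' should be read componentwise (cf.\ the last line of Definition~4.1), so in particular $\mathcal PG'$ is classical, meaning all the elements $u_{ia}u_{jb}^*$ and $u_{ia}^*u_{jb}$ pairwise commute in $C(G')$. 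These are exactly the defining relations of $U_N^*$, so $G'\subset U_N^*$ by universality. This is why the paper can simply say the maximality ``is clear''. With this correction your proof is complete.
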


\begin{proof}
The verification of all the axioms is standard, with the only non-trivial fact, namely the universality of the action $U_N^*\curvearrowright S^{N-1}_{\mathbb C,*}$, coming from Proposition 4.4 above. Regarding the last assertion, where ``biggest'' means as usual ``maximal'', this is clear.
\end{proof}

In view of the above result, the $U_N^*$ geometry as constructed above seems to be the ``correct'' complex half-classical geometry. This geometry waits of course to be developed, with the potential questions concerning the submanifolds $X\subset S^{N-1}_{\mathbb C,*}$ being a priori as many as the questions concerning the submanifolds $X\subset S^{N-1}_\mathbb C$, or perhaps $X\subset S^{N-1}_\mathbb R$, with the remark of course that there are whole books written on these latter manifolds. As a very first question here, interesting would be to work out the analogues of \cite{bic}, \cite{bdu}, in the present setting. Finally, we conjecture that the $U_N^*$ geometry is amenable, and is moreover maximal with this amenability property, at least in the easy framework.


\begin{thebibliography}{99}

\bibitem{ba1}T. Banica, Liberations and twists of real and complex spheres, {\em J. Geom. Phys.} {\bf 96} (2015), 1--25.

\bibitem{ba2}T. Banica, A duality principle for noncommutative cubes and spheres, {\em J. Noncommut. Geom.} {\bf 10} (2016), 1043--1081.

\bibitem{ba3}T. Banica, Half-liberated manifolds, and their quantum isometries, {\em Glasg. Math. J.} {\bf 59} (2017), 463--492.

\bibitem{ba4}T. Banica, Quantum isometries, noncommutative spheres, and related integrals, {\em Banach Center Publ.} {\bf 111} (2017), 101--144.

\bibitem{bbi}T. Banica and J. Bichon, Matrix models for noncommutative algebraic manifolds, {\em J. Lond. Math. Soc.} {\bf 95} (2017), 519--540.

\bibitem{bbc}T. Banica, J. Bichon, B. Collins and S. Curran, A maximality result for orthogonal quantum groups, {\em Comm. Algebra} {\bf 41} (2013), 656--665.

\bibitem{bco}T. Banica and B. Collins, Integration over compact quantum groups, {\em Publ. Res. Inst. Math. Sci.} {\bf 43} (2007), 277--302.

\bibitem{bgo}T. Banica and D. Goswami, Quantum isometries and noncommutative spheres, {\em Comm. Math. Phys.} {\bf 298} (2010), 343--356.

\bibitem{bme}T. Banica and S. M\'esz\'aros, Uniqueness results for noncommutative spheres and projective spaces, {\em Illinois J. Math.} {\bf 59} (2015), 219--233.

\bibitem{bsp}T. Banica and R. Speicher, Liberation of orthogonal Lie groups, {\em Adv. Math.} {\bf 222} (2009), 1461--1501.

\bibitem{bve}T. Banica and R. Vergnioux, Invariants of the half-liberated orthogonal group, {\em Ann. Inst. Fourier} {\bf 60} (2010), 2137--2164.

\bibitem{bdd}J. Bhowmick, F. D'Andrea and L. Dabrowski, Quantum isometries of the finite noncommutative geometry of the standard model, {\em Comm. Math. Phys.} {\bf 307} (2011), 101--131.

\bibitem{bd+}J. Bhowmick, F. D'Andrea, B. Das and L. Dabrowski, 
Quantum gauge symmetries in noncommutative geometry, {\em J. Noncommut. Geom.} {\bf 8} (2014), 433--471.

\bibitem{bhg}J. Bhowmick and D. Goswami, Quantum isometry groups: examples and computations, {\em Comm. Math. Phys.} {\bf 285} (2009), 421--444.

\bibitem{bic}J. Bichon, Half-liberated real spheres and their subspaces, {\em Colloq. Math.} {\bf 144} (2016), 273--287.

\bibitem{bdu}J. Bichon and M. Dubois-Violette, Half-commutative orthogonal Hopf algebras, {\em Pacific J. Math.} {\bf 263} (2013), 13--28. 

\bibitem{bra}R. Brauer, On algebras which are connected with the semisimple continuous groups, {\em Ann. of Math.} {\bf 38} (1937), 857--872.

\bibitem{cco}A.H. Chamseddine and A. Connes, The spectral action principle, {\em Comm. Math. Phys.} {\bf 186} (1997), 731--750.

\bibitem{con}A. Connes, Noncommutative geometry, Academic Press (1994).

\bibitem{fre}A. Freslon, On the partition approach to Schur-Weyl duality and free quantum groups, {\em Transform. Groups} {\bf 22} (2017), 707--751.

\bibitem{go1}D. Goswami, Quantum group of isometries in classical and  noncommutative geometry, {\em Comm. Math. Phys.} {\bf 285} (2009), 141--160.

\bibitem{go2}D. Goswami, Existence and examples of quantum isometry groups for a class of compact metric spaces, {\em Adv. Math.} {\bf 280} (2015), 340--359.

\bibitem{mal}S. Malacarne, Woronowicz's Tannaka-Krein duality and free orthogonal quantum groups, preprint 2016.

\bibitem{ntu}S. Neshveyev and L. Tuset, Compact quantum groups and their representation categories, SMF (2013).

\bibitem{rau}S. Raum, Isomorphisms and fusion rules of orthogonal free quantum groups and their complexifications, {\em Proc. Amer. Math. Soc.} {\bf 140} (2012), 3207--3218.

\bibitem{rwe}S. Raum and M. Weber, The full classification of orthogonal easy quantum groups, {\em Comm. Math. Phys.} {\bf 341} (2016), 751--779.

\bibitem{twe}P. Tarrago and M. Weber, Unitary easy quantum groups: the free case and the group case, preprint 2015.

\bibitem{wan}S. Wang, Free products of compact quantum groups, {\em Comm. Math. Phys.} {\bf 167} (1995), 671--692.

\bibitem{wo1}S.L. Woronowicz, Compact matrix pseudogroups, {\em Comm. Math. Phys.} {\bf 111} (1987), 613--665.

\bibitem{wo2}S.L. Woronowicz, Tannaka-Krein duality for compact matrix pseudogroups. Twisted SU(N) groups, {\em Invent. Math.} {\bf 93} (1988), 35--76.

\end{thebibliography}
\end{document}